\newtheorem{theorem}{Theorem}[section]
\newtheorem{proposition}[theorem]{Proposition}
\newtheorem{lemma}[theorem]{Lemma}
\newtheorem{corollary}[theorem]{Corollary}
\theoremstyle{definition}
\newtheorem{definition}[theorem]{Definition}
\newtheorem{remark}[theorem]{Remark}
\newcommand{\id}{\operatorname{id}}
\newcommand{\Tr}{\operatorname{Tr}}
\begin{document}
\date{2021-12-22}
\title[Wasserstein distance between systems]{Wasserstein distance between noncommutative dynamical systems}
\author{Rocco Duvenhage}
\address{Department of Physics\\
University of Pretoria\\
Pretoria 0002\\
South Africa}
\email{rocco.duvenhage@up.ac.za}
\subjclass[2020]{Primary: 49Q22. Secondary: 46L55, 81S22.}

\begin{abstract}
We study a class of quadratic Wasserstein distances on spaces consisting of
generalized dynamical systems on a von Neumann algebra. We emphasize how
symmetry of such a Wasserstein distance arises, 
but also study the asymmetric case. 
This setup is illustrated in the context of reduced dynamics, 
and a number of simple examples are also presented.

\end{abstract}
\maketitle

\section{Introduction}

%Sien "Optimale vervoer tussen dinamiese stelsels" skrif, bl. 119- 
%vir my beplanning van hierdie artikel.

This paper studies a transport plan based approach to quadratic Wasserstein
distances on spaces consisting of dynamical systems on a fixed von Neumann
algebra, where each system is equipped with a state invariant under the
dynamics. Such distances are defined on the states, in line with the usual
measure theoretic definition, but with the sets of allowed transport plans
``regulated'' by the dynamics. This is done by certain balance conditions
imposed on the transport plans using the dynamics of two systems, which could
also be viewed as covariance conditions. This indeed leads to distances on the
space of dynamical systems, which can be metrics, pseudometrics or asymmetric
(pseudo)metrics, depending on the assumptions.

%Another reason the approach is natural was already seen in, 
%where balance conditions for the modular groups of the states were used 
%to obtain symmetry of the Wasserstein metrics. 
%This paper essentially extends this specific case to a general setup 
%to include dynamics other than just modular groups. 
%The modular groups remain responsible for symmetry, however.
The approach taken here builds on \cite{D} and rests on bimodule ideas flowing
from the Tomita-Takesaki theory of von Neumann algebras. We can therefore
refer to it as a bimodule approach to Wasserstein metrics. It is analogous to
an approach taken in \cite{BV} for traces on C*-algebras in the context of
free probability.

One of our motivations for studying Wasserstein distances in this setting, is
to apply it to quantum detailed balance, specifically standard quantum
detailed balance conditions \cite{FU, FR}. In particular, to have a natural
measure of how far a system is from another satisfying detailed balance. The
notion of balance, mentioned above, was originally introduced in \cite{DS2}
with this goal in mind. Our eventual aim with Wasserstein distances is to make
that paper's approach more precise. This line of investigation will, however,
be pursued elsewhere.

The second reason to study Wasserstein distances between noncommutative
dynamical systems, rather than just states, is that the mentioned balance
conditions can significantly reduce the set of transport plans that need to be
considered in determining the distance. In this sense it is a simplification
that can ease the study of Wasserstein distances in concrete examples, which
in turn can give some insight into the nature and general behaviour of
Wasserstein distances in the bimodule approach. This paper indeed investigates
a handful of simple examples.

Another motivation is from ideas appearing in classical ergodic theory, in
particular in relation to the theory of joinings, the latter which now also
has a noncommutative counterpart \cite{D1, D2, D3, BCM, BCM2, BCM3}. In the
classical theory, these ideas originate with Ornstein's $\bar{d}$-metric
\cite{O, Vas}, developed further in \cite{GNS, RS}. It seems plausible that
the Wasserstein distances developed in this paper could have applications to
the theory of noncommutative joinings, though this will be left for future work.

In our formalism, we allow multiple dynamics in each system, with the dynamics
having minimal structure. For example, we do not assume the semigroup
property, the motivation being to allow for non-Markovian dynamics. We argue
in favour of this setup by illustrating the formalism in the context of
reduced dynamics, namely the dynamics a system inherits from a larger system
of which it is part. For reduced dynamics, semigroup properties may not hold,
and it will also be seen that further dynamics, other than the dynamics of
main interest, come up naturally, with balance conditions having a key role to play.

In order to obtain symmetry of the Wasserstein distances, the modular group
(or modular dynamics) of the state of a system has to be included in the
set of dynamics of the system, and corresponding modular balance conditions
between systems imposed, also involving the KMS-duals of all the dynamics.
This essentially generalizes the approach taken in \cite{D}. One of the goals
of this paper is to gain a better understanding of the role of modular
dynamics and these modular balance conditions by comparing the cases with and
without inclusion of the modular balance conditions. Consequently we
investigate both (pseudo)metrics and asymmetric (pseudo)metrics. The inclusion
of modular dynamics in addition to the dynamics of main interest, in order to
attain symmetry, is also another motivating reason not to restrict the
formalism to one dynamics per system.

Simple examples will be presented in Section \ref{AfdVbe} to gain further
insight into the general formalism. In particular, it will be seen by example
that without the above mentioned modular balance conditions, symmetry can
indeed fail. The examples will also cast some light on the behaviour of
Wasserstein distances in relation to the dynamics of systems, including
modular dynamics. Regarding the latter, certain jumps in the value of the
distance in the examples, suggest that it may be more natural to drop the
modular balance conditions when we are interested in distances between the
states themselves. In other words, it seems possible that asymmetric
Wasserstein (pseudo)metrics on states could be more natural in the bimodule
approach. This possibility will not be pursued further in this paper though.
On the other hand, for essentially the same reasons, the examples also
indicate that Wasserstein distances between systems should be of value in
discerning qualitative differences between dynamics, while typically being
insensitive to perturbations of dynamics. This could be of use in classifying
systems according to certain qualitative properties, as is relevant in ideas
related to detailed balance and ergodic theory.

Although the theory is developed in quite a general von Neumann algebraic
setup, the examples will mostly be for low dimensional matrix cases. The
reason for this is that in general it is difficult to determine the relevant
sets of transport plans, but in low dimensions one can make progress on this
problem by elementary means. We nevertheless study one example with an
infinite dimensional algebra as well, namely the quantum (or noncommutative)
torus. As mentioned earlier, the prospects to successfully calculate
Wasserstein distances improve significantly as the set of allowed transport
plans become smaller. Here the balance conditions come into play, making the
Wasserstein distances accessible in our examples.

We pause for a moment to give a very brief bird's eye view of noncommutative
optimal transport between states, in particular Wasserstein distances, which
is currently growing rapidly, to provide further context and motivation for
the present paper. The early work by Biane and Voiculescu \cite{BV} in the
framework of free probability was already mentioned, and lead to further
developments such as \cite{HPU}. Other transport plan (or coupling) based
approaches to Wasserstein distances, can be found in work by Ning, Georgiou
and Tannenbaum \cite{NGT}, a series of papers by Golse, Mouhot, Paul and
co-workers, starting with \cite{GMP}, as well as \cite{dePT} by de Palma and
Trevisan, though these papers in most cases did not manage to obtain metrics.
%these papers struggled to obtain metrics.
Agredo and Fagnola \cite{AF} indeed pointed out pitfalls in this respect. The
dynamical approach \cite{BB} to transport, from one distribution to another in
the classical case, formed the basis for noncommutative versions appearing in
series of papers by Carlen and Maas, as well as Chen, Georgiou and Tannenbaum,
starting with \cite{CM1} and \cite{CGT} respectively, with infinite
dimensional extensions by Hornshaw \cite{H1} and Wirth \cite{W}. There have
been other approaches as well, for example by Ikeda \cite{Ik} and De Palma,
Marvian, Trevisan and Lloyd \cite{DMTL}. Progress continues unabated, with
recent work such as \cite{CEFZ, FGP, GJNS, GR}, to name but a few. There are
many other papers treating further developments, but hopefully the mentioned
papers and remarks give the reader an idea of the importance, scope and
possibilities of noncommutative (or quantum) optimal transport. A standard
introduction to the classical theory, on the other hand, is \cite{V1}.

As for the structure of the paper: After fixing a few conventions and
collecting some preliminary concepts in the next section, we proceed in
Section \ref{AfdW2} to define our dynamical systems and to obtain Wasserstein
metrics and pseudometrics on spaces of such systems. Their asymmetric
counterparts, which arise when we drop the modular balance conditions outlined
above, are also discussed. This is followed by a study of reduced dynamics in
this setting in Section \ref{AfdRed}, to illustrate the formalism. The paper
concludes with examples in Section \ref{AfdVbe}, with the goal of making a
number of ideas in the paper concrete in simple cases.
%, placing some emphasis on the cases with and without the modular balance conditions

\section{Conventions and preliminaries}

We consider a $\sigma$-finite von Neumann algebra $M$ which is fixed
throughout this section and the next. Denote the set of faithful normal states
on $M$ by $\mathfrak{F}(M)$. We denote the modular group associated with
$\mu\in\mathfrak{F}(M)$ by $\sigma^{\mu}$. By the theory of standard forms
\cite{Ar74, Con74, Ha75} (also see \cite[Theorem 2.5.31]{BR1}) we can assume
that $M$ is in standard form, meaning that $M$ is a\ von Neumann algebra on a
Hilbert space $G$, with every faithful normal state $\mu\in\mathfrak{F}(M)$
given by a cyclic and separating vector $\Lambda_{\mu}\in G$ for $M$, namely
\[
\mu(a)=\left\langle \Lambda_{\mu},a\Lambda_{\mu}\right\rangle
\]
for all $a\in M$, which allows us to define a state 
$\mu'\in\mathfrak{F}(M')$ on the commutant of $M$ by
\[
\mu'(a')=\left\langle \Lambda_{\mu},a'\Lambda_{\mu}\right\rangle
\]
for all $a'\in M'$. The unit of a von Neumann will usually be
written as $1$, but in cases where multiple von Neumann algebras are involved,
we occasionally indicate the algebra as a subscript, i.e., $1_{M}$, for
clarification. Define
\[
j_{\mu}:=J_{\mu}(\cdot)^{\ast}J_{\mu}
\]
on $\mathcal{B}(G)$ in terms of the modular conjugation $J_{\mu}$ for $M$
associated to $\Lambda_{\mu}\in G$. Note that $\mu'=\mu\circ j_{\mu}$.

We use analogous notation and conventions for another von Neumann algebra $N$,
also in standard form on another Hilbert space $H$, since in Section
\ref{AfdRed} we occasionally need the duals described below for maps between
two different algebras. In addition, the added generality will clarify certain
points in Section \ref{AfdRed}.

We are going to make use of duals and KMS-duals of unital completely positive
(u.c.p.) maps. Duals were introduced in \cite{AC} (see \cite[Section 2]{DS2}
for a summary). KMS-duals and KMS-symmetry were studied and applied in
\cite{Pet, OPet, GL93, GL, C, FR, DS2}.

\begin{definition}
\label{KMS-duaal}Given a u.c.p. map $E:M\rightarrow N$ such that 
$\nu\circ E=\mu$ for some $\mu\in\mathfrak{F}(M)$ and $\nu\in\mathfrak{F}(N)$, 
we define its \emph{KMS-dual} (w.r.t. $\mu$ and $\nu$) as
\[
E^{\sigma}:=j_{\mu}\circ E'\circ j_{\nu}:N\rightarrow M
\]
in terms of the \emph{dual} (w.r.t. $\mu$ and $\nu$)
\[
E':N'\rightarrow M'%
\]
of $E$ defined by
\begin{equation}
\left\langle \Lambda_{\mu},aE'(b')\Lambda_{\mu}\right\rangle
=\left\langle \Lambda_{\nu},E(a)b'\Lambda_{\nu}\right\rangle
\label{duaalDef}
\end{equation}
for all $a\in M$ and $b'=N'$. (Consult \cite{AC} for the
theory behind such duals and \cite[Section 2]{DS2} for a summary.)
\end{definition}

Note that according to \cite[Proposition 3.1]{AC}, $E'$ is a u.c.p.
map satisfying $\mu'\circ E'=\nu'$. Correspondingly
$E^{\sigma}$ is u.c.p. and $\mu\circ E^{\sigma}=\nu$, while 
$(E^{\sigma})^{\sigma}=E$ follows from $(E')'=E$. We also point out that
under the assumptions in the definition above, the maps $E$, $E'$ and
$E^{\sigma}$ are normal, i.e., $\sigma$-weakly continuous, though this fact
will not play a direct role in our work.

A special case of particular interest to us, is $M=N$ and $\mu\circ E=\mu$,
where $E$ will be viewed as dynamics leaving the state $\mu$ invariant. In
such cases the dual and KMS-dual will always be with respect to $\mu$, namely
$E^{\sigma}:=j_{\mu}\circ E'\circ j_{\mu}$, with $E'$ defined
in terms of $\Lambda_{\mu}$ on both sides of (\ref{duaalDef}).

As in \cite{D}, we use the following basic notion as a starting point for
optimal transport.

\begin{definition}
\label{oordPlan}A \emph{transport plan} from $\mu\in\mathfrak{F}(M)$ to
$\nu\in\mathfrak{F}(N)$, is a state $\omega$ on the algebraic tensor product
$M\odot N'$ such that
\[
\omega(a\otimes1)=\mu(a) \text{ \ \ and \ \ } \omega(1\otimes b')=\nu'(b')
\]
for all $a\in M$ and $b'\in N'$. Denote the set of all
transport plans from $\mu$ to $\nu$ by $T(\mu,\nu)$.
Transport plans are also known as \emph{couplings}. 
\end{definition}

This is a direct extension of the corresponding classical notion, discussed
and motivated in \cite{V1}. But the commutant is introduced to fit into the
bimodule structure to come, and plays a central role in our setup.

As a notational
convention, a transport plan from $\mu$ to $\nu$ will usually be denoted by
$\omega$ as in the definition above, but from $\nu$ to $\xi$ by $\psi$, and
from $\mu$ to $\xi$ by $\varphi$, for any $\mu,\nu,\xi\in\mathfrak{F}(M)$.

Transport plans from $\mu$ to $\nu$ are in a one-to-one correspondence with
u.c.p. maps $E:M\rightarrow N$ such that $\nu\circ E=\mu$: Let
\[
\varpi_{N}:N\odot N'\rightarrow\mathcal{B}(H)
\]
be the unital $\ast$-homomorphism defined by extending 
$\varpi_{N}(b\otimes b')=bb'$. Note that
\[
\delta_{\nu}
:=\left\langle \Lambda_\nu,\varpi_{N}(\cdot)\Lambda_\nu\right\rangle
\]
is a transport plan from $\nu$ to itself. Then there is a unique map
\[
E_{\omega}:M\rightarrow N
\]
such that
\begin{equation}
\omega(a\otimes b')=\delta_{\nu}(E_{\omega}(a)\otimes b')
\label{E}
\end{equation}
for all $a\in M$ and $b'\in N'$. This map $E_{\omega}$ is
linear, normal, u.c.p., and satisfies
\[
\nu\circ E_{\omega}=\mu.
\]
Conversely, given a u.c.p. map $E:M\rightarrow N$ such that $\nu\circ E=\mu$,
it defines a transport plan $\omega_{E}$ from $\mu$ to $\nu$ by
\[
\omega_{E}(a\otimes b')=\delta_{\nu}(E(a)\otimes b'),
\]
which satisfies $E=E_{\omega_{E}}$. Technical details can be found in
\cite[Section 3]{DS2}. This correspondence appears, for example, in finite
dimensions in quantum information theory, where it is known as the
Choi-Jamio{\l}kowski duality (with Choi's version \cite{Choi} corresponding to
our setup), and also in the theory of noncommutative joinings \cite{BCM}.

\section{Wasserstein distances\label{AfdW2}}

Given these conventions and preliminaries, we proceed with only the one von
Neumann algebra $M$ in this section, to define two types of Wasserstein
distances $W$ and $W_{\sigma}$. The latter is a metric under suitable the
conditions, the former only an asymmetric metric.

We introduce the following series of definitions, forming the foundation of
our development.

\begin{definition}
\label{stelsel}A \emph{generalized system} on $M$ is given by 
$\mathbf{A}=\left(\alpha,\mu\right)$, where $\mu\in\mathfrak{F}(M)$, while $\alpha$
consists of the following: Let $\Upsilon$ be any set. To each 
$\upsilon\in\Upsilon$ corresponds a set $Z_{\upsilon}$ and 
\emph{generalized dynamics}
$\alpha_{\upsilon}$ on $M$, which is given by a u.c.p. map
\[
\alpha_{\upsilon,z}:M\rightarrow M
\]
for every $z\in Z_{\upsilon}$, such that
\[
\mu\circ\alpha_{\upsilon,z}=\mu
\]
for all $z\in Z_{\upsilon}$ and $\upsilon\in\Upsilon$. We then write
\[
\alpha=\left(  \alpha_{\upsilon}\right)  _{\upsilon\in\Upsilon}.
\]
\end{definition}

Such a generalized system wil be refered to simply as a ``system'' in the
sequel. We can view each $Z_{\upsilon}$ as a set of ``points in time'' in an
abstract sense. Each $\alpha_{\upsilon}$ is viewed as dynamics, so in effect
we have a set of dynamical systems on $M$, indexed by $\upsilon$. We do not
assume any structure, for example semigroup structure, on $Z_{\upsilon}$. In
part this is because we do not need any structure, but also since in
applications, we want to allow for dynamics that may not have semigroup
structure, as will be seen in Section \ref{AfdRed}.

Furthermore, we allow multiple dynamics, since in order to obtain symmetry of a
Wasserstein distance, we need to include the \emph{modular dynamics} (i.e.,
the modular group) in any case, while there are other natural dynamics that
can also play a role, an example of which will be seen in Section \ref{AfdRed}.

We can always include the modular dynamics among the the $\alpha^{\upsilon}$'s,
however, to emphasize the role it plays, it will be handled separately in
this section.

In the next section, multiple von Neumann algebras will be involved, in which
case we add the algebra to the notation for the system, i.e., 
$\mathbf{A}=\left(M,\alpha,\mu\right)$.

For the remainder of this section, we fix $\Upsilon$ and the $Z_{\upsilon}$'s.
The following notational convention will be used: $\mathbf{A}$ will denote
$\left(\alpha,\mu\right)$, as in the definition above, and similarly we
write
\[
\mathbf{B}=
\left(\beta,\nu\right)  \text{ \ \ and \ \ } \mathbf{C}=\left(\gamma,\xi\right)
\]
for systems on $M$, for the same sets $\Upsilon$ and the $Z_{\upsilon}$ used
in $\mathbf{A}$. Let
\[
X
\]
denote the set of all such systems $\mathbf{A}$ on $M$. Our Wasserstein
distances will be defined on $X$.

\begin{definition}
The \emph{KMS-dual} of a system $\mathbf{A}$, is the system on $M$ given by
\[
\mathbf{A}^{\sigma} = \left(\alpha^{\sigma},\mu\right)
\]
where 
$\alpha^{\sigma}=\left(\alpha_{\upsilon}^{\sigma}\right)_{\upsilon\in\Upsilon}$, 
and $\alpha_{\upsilon}^{\sigma}$ is given by
$\alpha_{\upsilon,z}^{\sigma}$, i.e., we take the KMS-dual of each
$\alpha_{\upsilon,z}$ w.r.t. $\mu$.
\end{definition}

To obtain our Wasserstein distances on $X$, we are going to use restricted
sets of transport plans. To define them, we use a property
which was called balance in \cite{DS2}:

\begin{definition}
\label{balans}We say that $\mathbf{A}$ and $\mathbf{B}$ (in this order) are in
\emph{balance} with respect to $\omega\in T(\mu,\nu)$, written as
\[
\mathbf{A}\omega\mathbf{B},
\]
if%
\[
\left(\alpha_{\upsilon},\mu\right)\omega\left(\beta_{\upsilon},\nu\right)
\]
for all $\upsilon\in\Upsilon$, by which we mean that
\begin{equation}
\omega(\alpha_{\upsilon,z}(a)\otimes b')=
\omega(a\otimes\beta_{\upsilon,z}'(b')) 
\label{balItvKop}
\end{equation}
for all $a\in M$, $b'\in M'$ and $z\in Z_{\upsilon}$, in terms
of the dual defined in Definition \ref{KMS-duaal}.
\end{definition}

\begin{remark}
\label{balUitbr}Balance has an obvious extension (see \cite{DS2}) to the case
where $\mathbf{A}$ and $\mathbf{B}$ do not necessarily have the same von
Neumann algebra, say $M$ and $N$ respectively. One would then write 
$\left(M,\alpha_{\upsilon},\mu\right) \omega \left( N,\beta_{\upsilon},\nu\right)$
to mean (\ref{balItvKop}) for all $a\in M$, $b'\in N'$. In the
present paper this extension is not technically needed, though in Section
\ref{AfdRed} the extended version will briefly be used to clarify certain points.
\end{remark}

An important example of balance is for the modular dynamics of the states,
namely
\[
\left(\sigma^{\mu},\mu\right) \omega \left(\sigma^{\nu},\nu\right),
\]
i.e.,
\[
\omega(\sigma_{t}^{\mu}(a)\otimes b')=\omega(a\otimes\sigma_{t}^{\nu'}(b'))
\]
for all $a\in M$, $b'=M'$ and $t\in\mathbb{R}$. Note that here
we used the fact that 
$\left(\sigma_{t}^{\nu}\right)' = \sigma_{t}^{\nu'}$, 
i.e., the modular group associated with $\nu'$.
%
%"Optimale vervoer & kwantuminligting" skrif, bl. 105.

Our restricted sets of transport plans are then defined as follows:

\begin{definition}
\label{T(A,B)}The set of \emph{transport plans} from $\mathbf{A}$ to
$\mathbf{B}$ is
\[
T(\mathbf{A},\mathbf{B}):=
\left\{\omega\in T(\mu,\nu):\mathbf{A}\omega\mathbf{B}\right\}.
\]
The set of \emph{modular} transport plans from $\mathbf{A}$ to $\mathbf{B}$
is
\[
T_\sigma(\mathbf{A},\mathbf{B}):=
\left\{
\omega\in T(\mathbf{A},\mathbf{B}):
\mathbf{A}^\sigma\omega\mathbf{B}^\sigma
\text{ and }
(\sigma^\mu,\mu)\omega(\sigma^\nu,\nu)
\right\}.
\]
\end{definition}

Note that we always have 
$\mu\odot\nu'\in T_\sigma(\mathbf{A},\mathbf{B})$. 
The conditions $\mathbf{A}^{\sigma}\omega\mathbf{B}^{\sigma}$
and $\left(\sigma^\mu,\mu\right)  \omega \left(\sigma^\nu,\nu\right)$
will collectively be called the \emph{modular balance conditions}.

\begin{remark}
\label{TvsTsig}
%Volg uit "Optimale vervoer & kwantuminligting" skrif, bl. 105.
%Sien ook "Optimale verbvoer" skrif, bl. 137.
Since $(\sigma_{t}^{\mu})^{\sigma}=\sigma_{-t}^{\mu}$, the condition 
$\left(\sigma^\mu,\mu\right)  \omega \left(\sigma^\nu,\nu\right)$ can also be
written as 
$(\sigma^\mu,\mu)^\sigma\omega(\sigma^\nu,\nu)^\sigma$.
Hence, if the modular dynamics are
included in $\mathbf{A}$ and $\mathbf{B}$, at the same index value $\upsilon$,
then the condition 
$\left(\sigma^{\mu},\mu\right)  \omega\left(\sigma^{\nu},\nu\right)$ 
becomes redundant in $T_{\sigma}(\mathbf{A},\mathbf{B})$.
Similarly, for any $\ast$-automorphism $\tau$ of $M$ such that $\mu\circ\tau=\mu$, 
we have 
$\tau^{\sigma}=\tau^{-1}$. If for each pair $\left(  \upsilon,z\right)$, it holds that
$\alpha_{\upsilon,z}$ and $\beta_{\upsilon,z}$ are either both $\ast$-automorphisms, or both \emph{KMS-symmetric}, i.e., 
$\alpha_{\upsilon,z}^{\sigma}=\alpha_{\upsilon,z}$ and $\beta_{\upsilon,z}^{\sigma}=\beta_{\upsilon,z}$,
then
\[
T_{\sigma}(\mathbf{A},\mathbf{B})=T(\mathbf{A},\mathbf{B})
\]
if the modular dynamics are included in $\mathbf{A}$ and $\mathbf{B}$ as above.
\end{remark}

By \cite[Theorem 4.1]{DS2} we can express $\mathbf{A}\omega\mathbf{B}$ as
\begin{equation}
E_{\omega}\circ\alpha_{\upsilon,z}=\beta_{\upsilon,z}\circ E_{\omega}
\label{BalE}
\end{equation}
for all $z\in Z_{\upsilon}$ and $\upsilon\in\Upsilon$, in terms of $E_{\omega}$ 
given by (\ref{E}). This formulation of balance as a covariance is
often useful, as it is in joinings \cite{BCM}.

\begin{definition}
\label{I}Given $k_{1},...,k_{n}\in M$, and writing 
$k=\left(  k_{1},...,k_{n}\right)  $, the associated \emph{transport cost function} 
$I_{k}$,
which gives the cost of transport $I_{k}(\omega)$ from $\mu\in\mathfrak{F}(M)$
to $\nu\in\mathfrak{F}(M)$ for the transport plan $\omega\in T(\mu,\nu)$, is
defined to be
\begin{equation}
I_{k}(\omega)=
\sum_{l=1}^{n}
\left[  
\mu(k_{l}^{\ast}k_{l})+\nu(k_{l}^{\ast}k_{l})-
\nu(E_{\omega}(k_{l})^{\ast}k_{l})-\nu(k_{l}^{\ast}E_{\omega}(k_{l}))
\right]. 
\label{Ialt}
\end{equation}
\end{definition}

This formulation uses the ideas we have set up so far. An equivalent, but more
suggestive formulation, in terms of the cyclic representations 
$(H_{\mu}^{\omega},\pi_{\mu}^{\omega},\Omega)$ 
and 
$(H_{\nu}^{\omega},\pi_{\nu}^{\omega},\Omega)$ 
of $\left(  M,\mu\right)  $ and $\left(  M,\nu\right)  $,
respectively, inherited from the cyclic representation 
$(H_{\omega},\pi_{\omega},\Omega)$ of $(M\odot M',\omega)$, 
is
\[
I_{k}(\omega)=
\left\|  \pi_\mu^\omega(k)\Omega-\pi_\nu^\omega(k)\Omega\right\|_{\oplus\omega}^2,
\]
where we have written
\[
\pi_{\mu}^{\omega}(k)\Omega
\equiv
\left(  \pi_\mu^\omega(k_{1})\Omega,...,\pi_\mu^\omega(k_{n})\Omega \right)  
\in
\bigoplus_{l=1}^{n}H_\omega,
\]
while $\left\|  \cdot\right\|  _{\oplus\omega}$ denotes the norm on
$\bigoplus_{l=1}^{n}H_{\omega}$. I.e.,
\[
I_{k}(\omega)=\sum_{l=1}^{n}\left\|  \pi_{\mu}^{\omega}(k_{l})\Omega-\pi_{\nu
}^{\omega}(k_{l})\Omega\right\|  _{\omega}^{2}.
\]
More detail regarding these representations can be found in \cite{D}. From
this formulation it is clear that $I_{k}(\omega)\geq0$, but it can also be
seen directly from (\ref{Ialt}):
%"Optimale vervoer" Skrif bl. 57.
For all $a,b\in M$,
\begin{align*}
&  
\mu(a^{\ast}a)+\nu(b^{\ast}b)-\nu(E_{\omega}(a)^{\ast}b)-\nu(b^{\ast}E_{\omega}(a))\\
&  
=
\nu\left(\left|  b-E_{\omega}(a)\right|  ^{2}\right) +
\mu\left(\left|  a\right| ^{2}\right) -
\nu\left(\left| E_{\omega}(a)\right|^{2}\right) \\
&  
\geq
\nu\left(  \left|  b-E_{\omega}(a)\right|  ^{2}\right) \\
&  
\geq
0
\end{align*}
by Kadison's inequality.

In what follows, by a \emph{distance function} on $X$, we simply mean a
function $d:X\times X\rightarrow\mathbb{R}$. Such a function may be a metric.
However, we are also interested in whether a distance function $d$ is a
\emph{pseudometric}, which means that it satisfies the triangle inequality, is
symmetric, and obeys $d(x,x)=0$ and $d(x,y)\geq0$, but could allow $d(x,y)=0$
for $x\neq y$. Similarly for the asymmetric cases, to which we return at the
end of this section.

The specific distance functions we study in this paper, will collectively be
called \emph{Wasserstein distances}. They are the distance functions $W$ and
$W_{\sigma}$ on the space $X$ of systems, given by our main definition below.

\begin{definition}
\label{W2}Given $k_{1},...,k_{n}\in A$, we define the associated
\emph{Wasserstein distance} $W$\ on $X$ by
\[
W(\mathbf{A},\mathbf{B}):=
\inf_{\omega\in T(\mathbf{A},\mathbf{B})}I_{k}(\omega)^{1/2},
\]
and the associated \emph{modular Wasserstein distance} $W_{\sigma}$\ on $X$
by
\[
W_{\sigma}(\mathbf{A},\mathbf{B}):=
\inf_{\omega\in T_{\sigma}(\mathbf{A},\mathbf{B})}I_{k}(\omega)^{1/2},
\]
for all $\mathbf{A},\mathbf{B}\in X$, in terms of Definition \ref{I}.
\end{definition}

These are also called a Wasserstein distances of order $2$, or a quadratic
Wasserstein distances. Since we focus exclusively on the quadratic case in
this paper, we do not include a subscript $2$ as is standard notation in the
classical case. More complete notation would be to include the $k$, say as
$W^{(k)}$ and $W_{\sigma}^{(k)}$, but no confusion should arise.

Clearly
\begin{equation}
W(\mathbf{A},\mathbf{B})\leq W_{\sigma}(\mathbf{A},\mathbf{B})
\label{Wfyner}
\end{equation}
for all $\mathbf{A},\mathbf{B}\in X$, while in specific examples it should be
easier to determine $W_{\sigma}(\mathbf{A},\mathbf{B})$, as the additional
balance conditions simplify finding the relevant (and smaller) set of
transport plans $T_{\sigma}(\mathbf{A},\mathbf{B})$.

The central results of this section are the following two theorems, regarding
the metric properties of $W_{\sigma}$ and $W$ respectively, with $W_{\sigma}$
enjoying symmetry, but $W$ not. The triangle inequality emerges from the
natural $M$-$M$-bimodule structure of the GNS Hilbert spaces $H_{\omega}$ of
the transport plans $\omega$, and their relative tensor products 
$H_{\omega}\otimes_{\nu}H_{\psi}$. 
These products are treated in \cite[Section IX.3]{T2}, 
but see also \cite{Fal} and the early work \cite{Sa}. They originated
with Connes' largely unpublished work on correspondences, though
\cite[Appendix V.B]{Con94} gives a partial exposition.

\begin{theorem}
\label{metriek}
Let $W_{\sigma}$ be the modular Wasserstein distance on $X$
associated to $k_{1},...,k_{n}\in M$.

(a) Then $W_{\sigma}$ is a pseudometric and its value 
$W_{\sigma}(\mathbf{A},\mathbf{B})$ 
is reached by some transport plan $\omega\in T_{\sigma}(\mathbf{A},\mathbf{B})$, 
i.e., optimal modular transport plans
always exist.

(b) If in addition we assume that 
$\{k_{1}^{\ast},...,k_{n}^{\ast}\}=\{k_{1},...,k_{n}\}$ 
and that $M$ is generated by $k_{1},...,k_{n}$, then
$W_{\sigma}$ is a metric.
\end{theorem}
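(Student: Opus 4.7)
For part (a), I would verify the pseudometric axioms and then prove existence of an optimizer. Nonnegativity is immediate from the Kadison-inequality bound $I_k\ge 0$ established after Definition \ref{I}, and $W_\sigma(\mathbf{A},\mathbf{A})=0$ is witnessed by $\omega=\delta_\mu\in T_\sigma(\mathbf{A},\mathbf{A})$: the associated u.c.p.\ map is $\id_M$, so all balance conditions hold trivially, and (\ref{Ialt}) with $\mu=\nu$ and $E_\omega=\id_M$ collapses to zero. For existence of a minimizer I would realize $T_\sigma(\mathbf{A},\mathbf{B})$, via (\ref{E}), as a closed subset of the compact set of normal u.c.p.\ maps $E:M\to M$ with $\nu\circ E=\mu$ in the point-$\sigma$-weak topology, cut out by the linear intertwining relations coming from the dynamical, KMS-dual, and modular-group balance conditions; the functional $E\mapsto I_k(\omega_E)$ is continuous there, so the infimum is attained.

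The triangle inequality is the central structural step. Given $\omega\in T_\sigma(\mathbf{A},\mathbf{B})$ and $\psi\in T_\sigma(\mathbf{B},\mathbf{C})$, I would form the composite $\varphi\in T(\mu,\xi)$ with $E_\varphi:=E_\psi\circ E_\omega$; each of the three balance conditions transfers to $\varphi$ by direct composition via (\ref{BalE}) together with $(F\circ E)^\sigma=E^\sigma\circ F^\sigma$, so $\varphi\in T_\sigma(\mathbf{A},\mathbf{C})$. I would then realize $\varphi$ on the relative tensor product $H_\omega\otimes_\nu H_\psi$ (with $H_\omega$ a right and $H_\psi$ a left $M$-module via the $\pi_\nu$ representations), so that $\pi_\mu^\varphi(k_l)\Omega=\pi_\mu^\omega(k_l)\Omega\otimes_\nu\Omega$ and $\pi_\xi^\varphi(k_l)\Omega=\Omega\otimes_\nu\pi_\xi^\psi(k_l)\Omega$, with $\pi_\nu^\omega(k_l)\Omega\otimes_\nu\Omega=\Omega\otimes_\nu\pi_\nu^\psi(k_l)\Omega$ serving as the intermediate waypoint. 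The Hilbert-space triangle inequality in $\bigoplus_l H_\varphi$, followed by Minkowski in the $\ell^2$-sum over $l$, yields $I_k(\varphi)^{1/2}\le I_k(\omega)^{1/2}+I_k(\psi)^{1/2}$.

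For symmetry I would use the $J$-flip $\tilde\omega(b\otimes a'):=\omega(j_\mu(a')\otimes j_\nu(b))$. Positivity and the correct marginals $\nu,\mu'$ follow from $j_\mu,j_\nu$ being $\ast$-anti-homomorphisms together with $\mu\circ j_\mu=\mu'$ and $\nu\circ j_\nu=\nu'$. The key balance transfer uses the identity $j_\mu\circ\alpha'_{\upsilon,z}=\alpha^\sigma_{\upsilon,z}\circ j_\mu$ (and its $\beta$-analogue): then $\mathbf{A}^\sigma\omega\mathbf{B}^\sigma$ becomes $\mathbf{B}\tilde\omega\mathbf{A}$, $\mathbf{A}\omega\mathbf{B}$ becomes $\mathbf{B}^\sigma\tilde\omega\mathbf{A}^\sigma$, and modular-group balance transfers because $(\sigma^\mu_t)^\sigma=\sigma^\mu_{-t}$; hence $\tilde\omega\in T_\sigma(\mathbf{B},\mathbf{A})$. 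Cost equality $I_k(\tilde\omega)=I_k(\omega)$ is most transparent from $E_{\tilde\omega}=E_\omega^\sigma$ applied in (\ref{Ialt}).

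For part (b), suppose $W_\sigma(\mathbf{A},\mathbf{B})=0$ and let $\omega$ be the optimizer from (a), so $I_k(\omega)=0$. The Kadison-inequality chain preceding Definition \ref{W2} forces $\nu(|k_l-E_\omega(k_l)|^2)=0$ and $\nu(|E_\omega(k_l)|^2)=\mu(|k_l|^2)$ for each $l$, so by faithfulness of $\nu$, $E_\omega(k_l)=k_l$ and $\mu(k_l^\ast k_l)=\nu(k_l^\ast k_l)$. A second application of Kadison's inequality gives $E_\omega(k_l^\ast k_l)\ge k_l^\ast k_l$; equating $\nu$-expectations via $\nu\circ E_\omega=\mu$ and faithfulness yields $E_\omega(k_l^\ast k_l)=k_l^\ast k_l$, and the hypothesis $\{k_l^\ast\}=\{k_l\}$ gives the swapped equality, so every $k_l$ lies in the multiplicative domain of $E_\omega$. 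Since the $k_l$ generate $M$ as a von Neumann algebra and $E_\omega$ is normal, this domain is all of $M$; hence $E_\omega$ is a normal unital $\ast$-endomorphism fixing the generators, so $E_\omega=\id_M$. Then $\mu=\nu\circ E_\omega=\nu$, and (\ref{BalE}) collapses to $\alpha_{\upsilon,z}=\beta_{\upsilon,z}$, so $\mathbf{A}=\mathbf{B}$. The main obstacle throughout is the triangle inequality: simultaneously preserving all three balance conditions of $T_\sigma$ under the relative-tensor-product composition is the structural reason the modular balance conditions were built into the definition.
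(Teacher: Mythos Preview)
Your overall architecture matches the paper's proof closely: $\delta_\mu$ witnesses $W_\sigma(\mathbf A,\mathbf A)=0$; the triangle inequality comes from composing $E_\varphi=E_\psi\circ E_\omega$ and invoking the relative tensor product $H_\omega\otimes_\nu H_\psi$; symmetry is obtained by passing from $\omega$ to the transport plan associated to $E_\omega^\sigma$; existence of optimizers comes from compactness. Your $J$-flip formula $\tilde\omega(b\otimes a')=\omega(j_\mu(a')\otimes j_\nu(b))$ is in fact a correct explicit realization of the paper's $\omega^\sigma$ (one checks $E_{\tilde\omega}=E_\omega^\sigma$ directly), and your multiplicative-domain argument for part (b) is a clean self-contained version of what the paper delegates to \cite[Corollary 6.4]{D}.

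There is, however, one genuine gap in your symmetry argument. You assert that cost equality $I_k(\tilde\omega)=I_k(\omega)$ is ``most transparent from $E_{\tilde\omega}=E_\omega^\sigma$ applied in (\ref{Ialt}).'' But substituting $E_\omega^\sigma$ into (\ref{Ialt}) only yields equality if one has the identity
\[
\mu\bigl(a\,E_\omega^\sigma(b)\bigr)=\nu\bigl(E_\omega(a)\,b\bigr)\qquad(a,b\in M),
\]
and this is \emph{not} a general property of the KMS-dual. As the paper recalls from \cite[Lemma 5.2]{D}, it is equivalent to $J_\nu U_\omega=U_\omega J_\mu$, which in turn follows from the modular balance $(\sigma^\mu,\mu)\omega(\sigma^\nu,\nu)$ via $\Delta_\nu^{it}U_\omega=U_\omega\Delta_\mu^{it}$. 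In other words, modular balance is used twice in the symmetry proof: once to land $\tilde\omega$ back in $T_\sigma(\mathbf B,\mathbf A)$ (which you do address), and once---crucially---to guarantee $I_k(\tilde\omega)=I_k(\omega)$. Without this second use the costs need not agree, which is precisely why $W$ in Theorem~\ref{asim} is only an asymmetric metric and why the explicit example in Subsection~\ref{OAfdAsim} exhibits $W(\mathbf A_\theta,\mathbf B_\theta)\neq W(\mathbf B_\theta,\mathbf A_\theta)$. Your sentence should be replaced by the observation that modular balance yields $J_\nu U_\omega=U_\omega J_\mu$, hence $\nu(b^*E_\omega(a))=\mu(E_\omega^\sigma(b)^*a)$ term by term in (\ref{Ialt}).
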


\begin{proof}
(a) By its definition, $W_{\sigma}$ is real-valued and never negative. Also
note that $W_{\sigma}(\mathbf{A},\mathbf{A})=0$ from (\ref{Ialt}) with
$\omega=\delta_{\mu}$, which is an element of 
$T_{\sigma}(\mathbf{A},\mathbf{A})$, since $E_{\omega}=\id_M$, 
trivially delivering all the balance requirements.

\emph{The triangle inequality.} For 
$\omega\in T_{\sigma}(\mathbf{A},\mathbf{B})$ 
and 
$\psi\in T_{\sigma}(\mathbf{B},\mathbf{C})$, 
we set
$\varphi=\omega\circ\psi$, defined via
\[
E_{\omega\circ\psi}=E_{\psi}\circ E_{\omega}.
\]
Note that $\varphi\in T_{\sigma}(\mathbf{A},\mathbf{C})$, since
\[
E_{\varphi}\circ\alpha_{\upsilon,z}=
E_{\psi}\circ E_{\omega}\circ\alpha_{\upsilon,z}=
E_{\psi}\circ\beta_{\upsilon,z}\circ E_{\omega}=
\gamma_{\upsilon,z}\circ E_{\psi}\circ E_{\omega}=
\gamma_{\upsilon,z}\circ E_{\varphi},
\]
hence $\mathbf{A}\varphi\mathbf{C}$, while similarly 
$\mathbf{A}^{\sigma}\varphi\mathbf{C}^{\sigma}$ 
and 
$(\sigma^{\mu},\mu)\varphi\left(  \sigma^{\xi},\xi\right)  $. 
As in the proof of \cite[Proposition 4.3]{D} we have
\[
I_{k}(\varphi)^{1/2}\leq I_{k}(\omega)^{1/2}+I_{k}(\psi)^{1/2},
\]
by employing the triangle inequality in 
$\bigoplus_{l=1}^{n}(H_{\omega}\otimes_{\nu}H_{\psi})$, 
and using properties of the relative tensor product
of the $M$-$M$-bimodules $H_{\omega}$ and $H_{\psi}$ (the required continuity
properties making them $M$-$M$-bimodules, were shown to hold in \cite[Theorem
3.3]{BCM}). Now take the infimum on the left over all of 
$T_{\sigma}(\mathbf{A},\mathbf{C})$, which includes the compositions 
$\omega\circ\psi$
for all 
$\omega\in T_{\sigma}(\mathbf{A},\mathbf{B})$ and 
$\psi\in T_{\sigma}(\mathbf{B},\mathbf{C})$, 
followed in turn by the infima over all 
$\omega\in T_{\sigma}(\mathbf{A},\mathbf{B})$ and 
$\psi\in T_{\sigma}(\mathbf{B},\mathbf{C})$ on the right.

\emph{Symmetry.} Recall from \cite[Lemma 5.2]{D} that a u.c.p. map
$E:M\rightarrow M$ satisfying $\nu\circ E=\mu$, has a Hilbert space
representation as a contraction $U:G\rightarrow G$ defined through
$Ua\Lambda_{\mu}=E(a)\Lambda_{\nu}$ for all $a\in M$, such that the following
equivalence holds:
\[
\mu(aE^{\sigma}(b))=\nu(E(a)b)
\]
for all $a,b\in M$, if and only if
\[
J_{\nu}U=UJ_{\mu}.
\]
%Sien ook "Optimale vervoer" skrif bl. 134-135 (sien ook 122-123).

%Sien ook "Optimale vervoer" Skrif bl. 124, maar daar voeg ek inf by wat nie 
%na simmetrie hieronder lei nie.
We now apply this to $E_{\omega}$ and its Hilbert space representation
$U_{\omega}$, for $\omega\in T_{\sigma}(\mathbf{A},\mathbf{B})$. Since
$\omega$ satisfies 
$\left(  \sigma^{\mu},\mu\right)  \omega\left(  \sigma^{\nu},\nu\right)  $, 
it follows from \cite[Theorem 4.1]{DS2} that
$\Delta_{\nu}^{it}U_{\omega}=U_{\omega}\Delta_{\mu}^{it}$, where 
$\Delta_{\mu}$ and $\Delta_{\nu}$ are the modular operators associated to 
$\Lambda_{\mu}$ and $\Lambda_{\nu}$ respectively.
%"Optimale vervoer" Skrif bl. 121, Prop 59.
Consequently $J_{\nu}U_{\omega}=U_{\omega}J_{\mu}$. It therefore follows that
\begin{align*}
&  
\mu(a^{\ast}a)+\nu(b^{\ast}b)-\nu(E_{\omega}(a)^{\ast}b)-
\nu(b^{\ast}E_{\omega}(a))\\
&  
=\nu(b^{\ast}b)+\mu(a^{\ast}a)-\mu(E_{\omega}^{\sigma}(b)^{\ast}a)-
\mu(a^{\ast}E_{\omega}^{\sigma}(b))\\
&  
=\nu(b^{\ast}b)+\mu(a^{\ast}a)-\mu(E_{\omega^{\sigma}}(b)^{\ast}a)-
\mu(a^{\ast}E_{\omega^{\sigma}}(b))
\end{align*}
where $\omega^{\sigma}\in T(\nu,\mu)$ is determined by 
$E_{\omega^{\sigma}}=E_{\omega}^{\sigma}$ 
according to \cite[Section 4]{DS2}. Note that
$\omega^{\sigma}\in T_{\sigma}(\mathbf{B},\mathbf{A})$, since 
$\mathbf{A}^{\sigma}\omega\mathbf{B}^{\sigma}$ 
is equivalent to $\mathbf{B}\omega^{\sigma}\mathbf{A}$, 
and $\mathbf{A}\omega\mathbf{B}$ to 
$\mathbf{B}^{\sigma}\omega^{\sigma}\mathbf{A}^{\sigma}$, 
according to \cite[Corollary 4.6]{DS2},
while 
$\left(\sigma^\nu,\nu\right) \omega^\sigma\left( \sigma^\mu,\mu\right) $, 
since 
$E_{\omega^{\sigma}}\circ\sigma_{t}^{\nu}=
(\sigma_{-t}^{\nu}\circ E_{\omega})^{\sigma}=
(E_{\omega}\circ\sigma_{-t}^{\mu})^{\sigma}=
\sigma_{t}^{\mu}\circ E_{\omega^{\sigma}}$, 
due to 
$(\sigma_{t}^{\mu})^{\sigma}=\sigma_{-t}^{\mu}$.
%"Optimale vervoer", Skrif bl. 136 en 137. Hier was 'n fout, maar dis nou reggestel.

The required symmetry 
$W_{\sigma}(\mathbf{A},\mathbf{B})=W_{\sigma}(\mathbf{B},\mathbf{A})$ 
now follows from (\ref{Ialt}) and Definition \ref{W2}
of $W_{\sigma}$, since for each $\omega$, every term in $I_{k}(\omega)$ is
equal to the corresponding term in $I_{k}(\omega^{\sigma})$, while
$(\omega^{\sigma})^{\sigma}=\omega$ because of $(E_{\omega}^{\sigma})^{\sigma
}=E_{\omega}$, giving a one-to-one correspondence between $T_{\sigma
}(\mathbf{A},\mathbf{B})$ and $T_{\sigma}(\mathbf{B},\mathbf{A})$, which means
we retain equality in the infima over $T_{\sigma}(\mathbf{A},\mathbf{B})$ and
$T_{\sigma}(\mathbf{B},\mathbf{A})$ respectively.
%Sien "Optimale vervoer tussen dinamiese stelsels" skrif, bl. 14-15 en 18.
%Hierdie bewys verskyn nie in my "Optimale vervoer" skrif nie; 
%die bewys op skrif bl. 126 bo-aan bevat 'n gaping.

\emph{Optimal transport plans exist.}
%Sien "Optimale vervoer tussen dinamiese stelsels" skrif, bl. 19-20.
We know (see for example \cite[Proposition 4.1]{D2}) that without loss we can
view each element of $T_{\sigma}(\mathbf{A},\mathbf{B})$ as a state on the
maximal C*-tensor product $A\otimes_{\text{max}}B'$, from which it
follows that $T_{\sigma}(\mathbf{A},\mathbf{B})$ is weakly* compact.
%Sien ook Optimale vervoer Skrif bl. 128-129, Prop 73.
By the definition of $W_{\sigma}$ there is a sequence 
$\omega_{q}\in T_{\sigma}(\mathbf{A},\mathbf{B})$ such that 
$I_{k}(\omega_{q})^{1/2}\rightarrow W_{\sigma}(\mathbf{A},\mathbf{B})$, 
which necessarily has a weak*
cluster point $\omega\in T_{\sigma}(\mathbf{A},\mathbf{B})$. The existence of
an optimal transport plan is now obtained by the same approximation argument
as for \cite[Lemma 6.2]{D}, namely 
$I_{k}(\omega)^{1/2}=W_{\sigma}(\mathbf{A},\mathbf{B})$.

(b)
%Sien "Optimale vervoer tussen dinamiese stelsels" skrif, bl. 20-21.
If $W_{\sigma}(\mu,\nu)=0$, then $\mu=\nu$ follows from the existence of an
optimal transport plan combined with \cite[Corollary 6.4]{D}.
\end{proof}

Dropping the modular balance conditions, the same proof, with minor
modifications (mostly simplifications) also delivers the corresponding result
for $W$ below, but without symmetry. We say that 
$d:X\times X\rightarrow\lbrack 0,\infty)$ is an \emph{asymmetric pseudometric}, if it satisfies the
triangle inequality and $d(x,x)=0$. If in addition $d(x,y)=0$ implies that
$x=y$, then we call $d$ an \emph{asymmetric metric}. The point being that we
do not assume symmetry, $d(x,y)=d(y,x)$, in these definitions.

\begin{theorem}
\label{asim}
Let $W$ be the Wasserstein distance on $X$ associated to
$k_{1},...,k_{n}\in M$.

(a) Then $W$ is an asymmetric pseudometric and its value 
$W(\mathbf{A},\mathbf{B})$ is reached by some transport plan 
$\omega\in T(\mathbf{A},\mathbf{B})$, i.e., optimal transport plans always exist.

(b) If in addition we assume that 
$\{k_{1}^{\ast},...,k_{n}^{\ast}\}=\{k_{1},...,k_{n}\}$ and that 
$M$ is generated by $k_{1},...,k_{n}$, then $W$ is an asymmetric metric.
\end{theorem}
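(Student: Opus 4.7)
The plan is to replicate the proof of Theorem \ref{metriek} essentially verbatim, stripped of every appearance of the modular balance conditions. The only property of Theorem \ref{metriek} that truly needed those conditions is symmetry, which is precisely what we now drop; every other step either survives intact or becomes simpler. I would therefore organize the argument in the same order as before: basic properties, triangle inequality via composition of transport plans, existence via weak* compactness, and finally the metric property under the generation hypothesis.

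For (a), non-negativity of $W$ is immediate from $I_{k}\geq 0$, which was already derived via Kadison's inequality in the excerpt. For reflexivity, I would take $\omega=\delta_{\mu}$: it lies in $T(\mathbf{A},\mathbf{A})$ because $E_{\delta_{\mu}}=\id_{M}$ trivially satisfies (\ref{BalE}), and $I_{k}(\delta_{\mu})=0$. For the triangle inequality I would compose transport plans: given $\omega\in T(\mathbf{A},\mathbf{B})$ and $\psi\in T(\mathbf{B},\mathbf{C})$, set $\varphi=\omega\circ\psi$ via $E_{\varphi}=E_{\psi}\circ E_{\omega}$. The covariance formulation (\ref{BalE}) immediately gives $\mathbf{A}\varphi\mathbf{C}$, so $\varphi\in T(\mathbf{A},\mathbf{C})$, and the bound $I_{k}(\varphi)^{1/2}\leq I_{k}(\omega)^{1/2}+I_{k}(\psi)^{1/2}$ follows verbatim from \cite[Proposition 4.3]{D} using the relative tensor product of the $M$-$M$-bimodules $H_{\omega}$ and $H_{\psi}$. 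Taking infima over $T(\mathbf{A},\mathbf{C})$, $T(\mathbf{A},\mathbf{B})$ and $T(\mathbf{B},\mathbf{C})$ in turn yields the desired inequality. For existence of optima, I would note that $T(\mathbf{A},\mathbf{B})$ sits inside the state space of $M\otimes_{\text{max}}M'$ as the intersection of weakly* closed conditions (the marginals and the balance conditions), hence is weakly* compact, and a minimizing sequence has a cluster point attaining the infimum by the same approximation argument as for \cite[Lemma 6.2]{D}.

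For (b), suppose $W(\mathbf{A},\mathbf{B})=0$ and let $\omega\in T(\mathbf{A},\mathbf{B})$ be an optimizer delivered by (a), so $I_{k}(\omega)=0$. Inspecting the Kadison chain displayed just before Definition \ref{W2}, with $a=b=k_{l}$, forces $\nu(|k_{l}-E_{\omega}(k_{l})|^{2})=0$ and $\mu(k_{l}^{\ast}k_{l})=\nu(E_{\omega}(k_{l})^{\ast}E_{\omega}(k_{l}))$ for every $l$. Faithfulness of $\nu$ then gives $E_{\omega}(k_{l})=k_{l}$, so that combined with the self-adjoint closure $\{k_{1}^{\ast},...,k_{n}^{\ast}\}=\{k_{1},...,k_{n}\}$ and the generation hypothesis on $\{k_{1},...,k_{n}\}$, \cite[Corollary 6.4]{D} delivers $\mu=\nu$ and $E_{\omega}=\id_{M}$. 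Substituting $E_{\omega}=\id_{M}$ into the covariance form (\ref{BalE}) of balance then forces $\alpha_{\upsilon,z}=\beta_{\upsilon,z}$ for every $\upsilon\in\Upsilon$ and $z\in Z_{\upsilon}$, hence $\mathbf{A}=\mathbf{B}$.

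The main obstacle is not really technical; it is only to verify that every piece of the Theorem \ref{metriek} argument that one might instinctively port over actually goes through without the modular balance conditions. The key checks are that composition of transport plans preserves arbitrary balance conditions of the form (\ref{BalE}) (which is immediate) and that the uniqueness argument in (b) never invoked the KMS-duality identity $J_{\nu}U_{\omega}=U_{\omega}J_{\mu}$ that drove the symmetry portion of Theorem \ref{metriek}; since neither invocation is needed, the earlier proof specializes cleanly to the asymmetric setting.
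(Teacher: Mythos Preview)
Your proposal is correct and follows exactly the route the paper itself prescribes: the paper does not give a separate proof of Theorem~\ref{asim} but simply instructs the reader to rerun the proof of Theorem~\ref{metriek} with the modular balance conditions deleted, which is precisely what you do. Your treatment of part~(b) is in fact more explicit than the paper's, since you spell out how $E_{\omega}=\id_{M}$ combined with (\ref{BalE}) forces $\alpha_{\upsilon,z}=\beta_{\upsilon,z}$ and hence $\mathbf{A}=\mathbf{B}$, whereas the paper's corresponding argument for Theorem~\ref{metriek}(b) only records $\mu=\nu$ and leaves the dynamical identification implicit.
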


In Section \ref{AfdVbe} it will indeed be
seen by simple example that in general $W$ does not possess symmetry.

We study both the \emph{modular Wasserstein pseudometric} $W_{\sigma}$ and the
\emph{asymmetric Wasserstein pseudometric} $W$ in the sequel. It tends to be
easier to prove results for $W$, as we do not need to take care of the modular
balance conditions. Consequently, as above, we provide more detailed arguments
regarding $W_{\sigma}$ in the next section.

\section{Reduced systems\label{AfdRed}}

The goal of this section is to illustrate that the formulation of Wasserstein
distances between systems in the previous section, in terms of balance
conditions, is natural. We do this in the context of reduced dynamics, where,
loosely speaking, we study the dynamics of a system $\mathbf{S}$, which is
interacting with another, called the \emph{reservoir}. The interacting
reservoir-system combination, will be called a \emph{composite system}
$\mathbf{A}$, with its dynamics due to the interaction called its
\emph{evolution}. In such cases $\mathbf{S}$ is referred to as an \emph{open
system}, but in this section we also call it a \emph{reduced system}, as its
dynamics is ``reduced'' from that of the composite system. We aim to determine
how Wasserstein distances on the set of reduced systems relate to Wasserstein
distances on the set of composite systems.

The literature on open sytems is vast. Standard textbook treatments can be
found in \cite{BP, Da}, for example. However, we presuppose as little as
possible background from this field, in order to make this section accessible
using only the framework set up so far.

We assume the reservoir and $\mathbf{S}$ to have fixed von Neumann algebras
$R$ and $S$ respectively, but allow different states and dynamics to obtain a
set of composite systems and a set of reduced systems (we do not need to
define the reservoirs as separate entities). We then make use of a transport
cost function (Definition \ref{I}) for which the $k_{l}$'s are in $1_R\otimes
S$, giving Wasserstein distances on the set $Y$ of reduced systems. In this
context we can also obtain natural Wasserstein distances on the set of
composite systems via a reduction. This reduction is performed by including an
appropriate conditional expectation of $R\otimes S$ onto $1_{R}\otimes S$ as
dynamics (a \emph{channel}) in the composite system.

In general
%non-Markovian, i.e.,
the reduced dynamics does not satisfy a semigroup property, even if that of
the composite system does. In this section we focus on the case where the
evolution of $\mathbf{A}$ does have the semigroup property. The reason for
this is solely to emphasize that even then the semigroup property in general
does not hold for $\mathbf{S}$.

If we include the modular dynamics, then both our composite system and reduced
system will have $\Upsilon=\{1,2\}$, with a semigroup $Z_{1}=\Gamma$ 
for the evolution, and $Z_{2}=\mathbb{R}$ for the modular dynamics.
However, we also consider an augmented form of a composite system, with the
dynamics supplemented by the conditional expectation mentioned above. In this
case one has $\Upsilon=\{1,2,3\}$, with the same $Z_{1}$ and $Z_{2}$ as for
the composite systems, and a one-point set $Z_{3}$ for the conditional expectation.

In this way, the present section demonstrates why the general setup of the
previous section does not assume semigroup (or any other) structure on the
sets $Z_{\upsilon}$, and why it allows multiple dynamics (indexed by
$\upsilon\in\Upsilon$). This section also emphasizes the balance conditions,
in this case in particular for the conditional expectation, to illustrate that
they play a natural role.

\begin{remark}
In this section we formulate everything with a view towards $W_{\sigma}$, that
is, with the modular dynamics and the required balance conditions ensuring
symmetry included. The same outline holds for $W$, but with simpler and
shorter arguments, hence we do not present them separately. Short remarks at
the end of this section will suffice.
\end{remark}

To make the discussion above more precise, the setup in this section is
as follows:
%
%Sien "Optimale vervoer tussen dinamiese stelsels" skrif, bl. 73.

We set
\[
M=R\bar{\otimes}S,
\]
where $R$ and $S$ are both $\sigma$-finite von Neumann algebras in standard
form, fixed throughout this section, and $\bar{\otimes}$ denotes the von
Neumann algebraic tensor product. In this section we only consider product
states
\[
\mu=\mu_{R}\bar{\otimes}\mu_{S}
\]
on $M$, where $\mu_{R}\in\mathfrak{F}(R)$ and $\mu_{S}\in\mathfrak{F}(S)$.

In the previous section, the notation $\left( \alpha,\mu\right) $ was used
for a system, however, since there are several algebras involved in this
section, we include the algebra as well in the notation for the system, namely
$\left(  M,\alpha,\mu\right) $. We are interested in \emph{composite systems}
of the form
\begin{equation}
\mathbf{A} = \left(  M,\alpha,\sigma^{\mu},\mu\right)  =
\left( M,\alpha,\sigma^{\mu_R\bar\otimes\mu_S},\mu_{R}\bar\otimes\mu_S \right),
\label{basVorm}
\end{equation}
but will also require their \emph{augmented} form, given by
%
%Nie in my notas bespreek nie, maar lyk nou vir my na die gerieflikste manier om
%die storie te vertel.
\[
\mathbf{A}^{\mathfrak{p}}=
\left(  M,\alpha,\sigma^{\mu},P_{1\otimes S}^{\mu_{R}},\mu\right)  ,
\]
where:

$\alpha$ is a semigroup of u.c.p. maps $\alpha_{g}:M\rightarrow M$ such that
$\mu\circ\alpha_{g}=\mu$ for all $g\in\Gamma$. Here the term semigroup means
that $\alpha_{gh}=\alpha_{g}\circ\alpha_{h}$ for all $g,h\in\Gamma$. We call
$\alpha$ the \emph{evolution} of $\mathbf{A}$.

$P_{1\otimes S}^{\mu_{R}}:M\rightarrow M$ is the conditional expectation onto
$1_{R}\otimes S$ defined by
\[
P_{1\otimes S}^{\mu_{R}}=
\left(  \mu_{R}1_{R}\right)  \bar{\otimes}\operatorname{id}_{S},
\]
where $1_{R}$ is the unit of $R$. In particular, 
$P_{1\otimes S}^{\mu_{R}}(r\otimes s)=\mu_{R}(r)1_{R}\otimes s$ 
for any elementary tensor $r\otimes s$
in $M$.

Let
\[
X_{\otimes}
\]
be the space of all composite systems on $M$ of the form in (\ref{basVorm}),
as described above. Similarly,
\[
X_{\otimes}^{\mathfrak{p}}
\]
will denote the space of augmented systems as defined above.
%
%The Wasserstein metrics are in fact defined on the larger spaces $X$ as
%defined in Section \ref{W2}, respectively for $\Upsilon =\{1,2\}$, $%
%Z_{1}=\Gamma $ and $Z_{2}=\mathbb{R}$, and for $\Upsilon =\{1,2,3\}$, $%
%Z_{1}=\Gamma $, $Z_{2}=\mathbb{R}$ and a one-point set $Z_{3}$. However, the
%restricted spaces above are all that are needed here. 

Note that we explicitly include the modular dynamics in the systems to ensure
symmetry of the Wasserstein distances, as seen in Section \ref{AfdW2}.

\begin{remark}
A standard physical situation is where $\Gamma=\mathbb{R}$, with $\alpha$ in
addition being a one-parameter group of $\ast$-automorphisms, i.e.,
$\alpha_{t}$ is a $\ast$-automorphism for every $t\in\mathbb{R}$ and
$\alpha_{0}=\id_{M}$. On the other hand, one could in
fact use more abstract generalized dynamics as in Section \ref{AfdW2} instead
of a semigroup $\alpha$, since the semigroup property will not be used
explicitly. But to clarify how the semigroup property can be spoiled by
reduction, we use the setting where $\alpha$ is a semigroup.
\end{remark}

\begin{remark}
The reason for introducing the augmented systems, is that balance between the
conditional expectations $P_{1\otimes S}^{\mu_{R}}$ will play an essential
role in connecting Wasserstein distances on the composite systems, to those on
the reduced systems discussed below.
\end{remark}

The \emph{reduced system} on $S$, or the \emph{reduction} of $\mathbf{A}$ to
$S$, is defined to be the system
\[
\mathbf{A}^{\mathfrak{r}}=
\left(  S,\alpha^{\mathfrak{r}},\sigma^{\mu_{S}},\mu_{S}\right)  ,
\]
where the reduced dynamics $\alpha^{\mathfrak{r}}:S\rightarrow S$ is given by
\[
\alpha_{g}^{\mathfrak{r}}:=P_{S}^{\mu_{R}}\circ\alpha_{g}\circ\iota_{S,M},
\]
for all $g\in\Gamma$, in terms of
\[
P_{S}^{\mu_{R}}=
\mu_{R}\bar{\otimes}\id_{S}:M\rightarrow S
\]
and
\[
\iota_{S,M}:S\rightarrow M:s\mapsto 1_R\otimes s.
\]
Note that indeed
\[
\mu_{S}\circ\alpha_{g}^{\mathfrak{r}}=
\mu\circ\alpha_{g}\circ\iota_{S,M}=
\mu\circ\iota_{S,M}=
\mu_{S},
\]
as is required of a system. One of the main points here, is that
$\alpha^{\mathfrak{r}}$ clearly need not be a semigroup, despite the fact that
$\alpha$ is, since $\alpha$ need not be product dynamics (due to interaction).
This reduced system $\mathbf{A}^{\mathfrak{r}}$ is what we referred to as
$\mathbf{S}$ in the preliminary discussion at the beginning of the section.

The set of reduced systems obtained in this way, will be denoted by
\[
Y.
\]
The reduced dynamics obtained as part of these reduced systems, can be viewed
as a special class of reduced dynamics, as we assume that they are reduced
from dynamics leaving $\mu_{R}\bar{\otimes}\mu_{S}$ invariant. This is not the
most general situation one could consider, but this class is what fits into
our framework for Wasserstein distances.

Although we are eventually interested only on systems on $M$, a key point
regarding balance (see Proposition \ref{BalOordr} below), and its proof, is
clarified when generalized, by also considering analogous systems on a second
von Neumann algebra $N$. So for $\sigma$-finite von Neumann algebras $K$ and
$L$, we set
\[
N:=K\bar{\otimes}L,
\]
\[
\mathbf{B}=\left(  N,\beta,\sigma^{\nu},\nu\right)  ,
\]
and
\[
\mathbf{B}^{\mathfrak{p}}\mathbf=
\left(  N,\beta,\sigma^{\nu},P_{1\otimes L}^{\nu_{K}},\nu\right)  ,
\]
where $\beta$ is a semigroup of u.c.p. maps leaving
\[
\nu=\nu_{K}\bar{\otimes}\nu_{L}\in\mathfrak{F}(N),
\]
invariant, with $\nu_{K}\in\mathfrak{F}(K)$ and $\nu_{L}\in\mathfrak{F}(L)$,
and the same sets $Z_{1}$, $Z_{2}$ and $Z_{3}$ as before. As mentioned in
Remark \ref{balUitbr}, the notion of balance $\mathbf{A}\omega\mathbf{B}$
still makes sense.

In this setup, balance between the augmented composite systems, carries over
to the reduced systems, as will be seen in Proposition \ref{BalOordr} below,
but first a lemma to get there:

\begin{lemma}
\label{redOordPl}Given $\omega\in T(\mu,\nu)$, with $\mu=\mu_{R}\bar{\otimes
}\mu_{S}$ and $\nu=\nu_{K}\bar{\otimes}\nu_{L}$ as above, we set
\[
\omega^{\mathfrak{r}}:=\omega\circ\left(  \iota_{S,M}\odot\iota_{L',N'}\right)  \in T(\mu_{s},\nu_{L}).
\]
Assuming the balance condition
\[
\left(  M,P_{1\otimes S}^{\mu_{R}},\mu\right)  \omega\left(  N,P_{1\otimes
L}^{\nu_{K}},\nu\right)  ,
\]
it follows that
\[
E_{\omega^{\mathfrak{r}}}=P_{L}^{\nu_{K}}\circ E_{\omega}\circ\iota_{S,M}.
\]
%
%"Optimale vervoer tussen dinamiese stelsels" bl. 75.
\end{lemma}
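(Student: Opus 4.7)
The plan is to reduce the identity to a computation in three steps, moving from the balance hypothesis to the operator equation $E_\omega \circ P_{1\otimes S}^{\mu_R} = P_{1\otimes L}^{\nu_K} \circ E_\omega$, then pinning down $E_\omega$ on the subalgebra $1_R \otimes S$, and finally matching transport plans via the defining relation (\ref{E}).

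First, I would rewrite the balance condition via (\ref{BalE}) in its natural extension to maps between two algebras (cf.\ Remark \ref{balUitbr}), which turns the hypothesis $(M,P_{1\otimes S}^{\mu_R},\mu)\omega(N,P_{1\otimes L}^{\nu_K},\nu)$ into
\[
E_\omega \circ P_{1\otimes S}^{\mu_R} = P_{1\otimes L}^{\nu_K} \circ E_\omega.
\]
Applying this to $\iota_{S,M}(s) = 1_R \otimes s$ and using $P_{1\otimes S}^{\mu_R}(1_R \otimes s) = \mu_R(1_R)(1_R \otimes s) = \iota_{S,M}(s)$ gives $E_\omega(\iota_{S,M}(s)) = P_{1\otimes L}^{\nu_K}(E_\omega(\iota_{S,M}(s)))$, so this element lies in the range $1_K \otimes L$ of $P_{1\otimes L}^{\nu_K}$. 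Writing it as $1_K \otimes T(s)$ defines a u.c.p.\ map $T : S \to L$, and applying $P_L^{\nu_K} = \nu_K \bar\otimes \id_L$ recovers $T(s) = P_L^{\nu_K} \circ E_\omega \circ \iota_{S,M}(s)$.

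Second, I would verify $T = E_{\omega^{\mathfrak r}}$ through the uniqueness clause in (\ref{E}). Directly from the definitions,
\[
\omega^{\mathfrak r}(s \otimes t')
= \omega\bigl((1_R \otimes s) \otimes (1_{K'} \otimes t')\bigr)
= \bigl\langle \Lambda_\nu, E_\omega(1_R \otimes s)(1_{K'} \otimes t')\Lambda_\nu\bigr\rangle.
\]
Using the standard form identification $H = H_K \otimes H_L$ with $\Lambda_\nu = \Lambda_{\nu_K} \otimes \Lambda_{\nu_L}$, and substituting $E_\omega(1_R \otimes s) = 1_K \otimes T(s)$, the inner product factors as $\nu_K(1_K)\,\langle \Lambda_{\nu_L}, T(s)\,t'\,\Lambda_{\nu_L}\rangle = \delta_{\nu_L}(T(s) \otimes t')$. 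By the uniqueness in (\ref{E}) applied to the plan $\omega^{\mathfrak r} \in T(\mu_S,\nu_L)$, this forces $E_{\omega^{\mathfrak r}} = T = P_L^{\nu_K} \circ E_\omega \circ \iota_{S,M}$.

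The only real obstacle is the bookkeeping for the tensor-product standard form — ensuring $(N)' = K' \bar\otimes L'$ acts as claimed and that $\Lambda_\nu$ factors as $\Lambda_{\nu_K} \otimes \Lambda_{\nu_L}$ (so that the product-state character of $\nu$ lets the inner product split). Once this standard fact is invoked, the argument is essentially mechanical: the balance condition for the conditional expectations is precisely the device that localises $E_\omega$ on $1_R \otimes S$ to an element of $1_K \otimes L$, and everything else follows from the defining identity for $E_\omega$.
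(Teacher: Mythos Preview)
Your argument is correct and follows essentially the same route as the paper: both proofs use the balance hypothesis in the form $E_\omega\circ P_{1\otimes S}^{\mu_R}=P_{1\otimes L}^{\nu_K}\circ E_\omega$, apply it to $\iota_{S,M}(s)$, and then verify the defining identity (\ref{E}) for $\omega^{\mathfrak r}$ by factoring $\delta_\nu$ (equivalently, the inner product against $\Lambda_\nu=\Lambda_{\nu_K}\otimes\Lambda_{\nu_L}$) through the tensor splitting. The only cosmetic difference is that you first isolate $E_\omega(\iota_{S,M}(s))\in 1_K\otimes L$ as a separate observation before computing, whereas the paper folds this into its chain of equalities; and you omit the one-line check that $\omega^{\mathfrak r}\in T(\mu_S,\nu_L)$, which the paper dispatches with ``it is easily checked''.
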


\begin{proof}
From $\omega\in T(\mu,\nu)$ it is easily checked that $\omega^{\mathfrak{r}
}\in T\left(  \mu_{S},\nu_{L}\right)  $.
%
%"Optimale vervoer tussen dinamiese stelsels" bl. 73.
Since, according to (\ref{E}), $E_{\omega^{\mathfrak{r}}}:S\rightarrow L$ is
determined by $\omega^{\mathfrak{r}}=\delta_{\nu_{L}}\circ\left(
E_{\omega^{\mathfrak{r}}}\odot\id_{L'}\right)  $, we
can verify this lemma as follows using (\ref{BalE}): for all $s\in S$ and
$l'\in L'$,
\begin{align*}
\omega^{\mathfrak{r}}(s\otimes l')  &  =\omega\left(  \iota
_{S,M}(s)\otimes\iota_{L',N'}(l')\right) \\
&  =\delta_{\nu}\left(  E_{\omega}(1_{R}\otimes s)\otimes\iota_{L',N'}(l')\right) \\
&  =\delta_{\nu}\left(  E_{\omega}\circ P_{1\otimes S}^{\mu_{R}}(1_{R}\otimes
s)\otimes\iota_{L',N'}(l')\right) \\
&  =\delta_{\nu}\left(  P_{1\otimes L}^{\nu_{K}}\circ E_{\omega}(\iota
_{S,M}(s))\otimes\iota_{L',N'}(l')\right) \\
&  =\delta_{\nu_{K}}\odot\delta_{\nu_{L}}\left(  \left(  1_{K}\otimes
P_{L}^{\nu_{K}}\circ E_{\omega}\circ\iota_{S,M}(s)\right)  \otimes
(1_{K'}\otimes l')\right) \\
&  =\delta_{\nu_{L}}\left(  P_{L}^{\nu_{K}}\circ E_{\omega}\circ\iota
_{S,M}(s)\otimes l'\right)  ,
\end{align*}
as required.
\end{proof}

I.e., the u.c.p. map $E_{\omega^\mathfrak{r}}$ corresponding to the \emph{reduced}
transport plan $\omega^\mathfrak{r}$, is given by the \emph{reduction} of $E_{\omega}$.
Now, as promised, balance between the augmented composite systems, carries
over to the reduced systems:

\begin{proposition}
\label{BalOordr}If $\mathbf{A}^{\mathfrak{p}}\omega\mathbf{B}^{\mathfrak{p}}
$, then $\mathbf{A}^{\mathfrak{r}}\omega^{\mathfrak{r}}\mathbf{B}^{\mathfrak{r}}$.
%
%Sien "Optimale vervoer tussen dinamiese stelsels" skrif, bl. 85.
\end{proposition}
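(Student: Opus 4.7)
The plan is to prove this by translating both the hypothesis and the conclusion into the covariance formulation of balance from (\ref{BalE}), and then use Lemma \ref{redOordPl} to express $E_{\omega^{\mathfrak{r}}}$ in terms of $E_\omega$. Specifically, the hypothesis $\mathbf{A}^{\mathfrak{p}}\omega\mathbf{B}^{\mathfrak{p}}$ unpacks into three covariance identities on $E_\omega$: one for the evolution ($E_\omega\circ\alpha_g=\beta_g\circ E_\omega$), one for the modular group ($E_\omega\circ\sigma^\mu_t=\sigma^\nu_t\circ E_\omega$), and one for the conditional expectations ($E_\omega\circ P^{\mu_R}_{1\otimes S}=P^{\nu_K}_{1\otimes L}\circ E_\omega$). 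The goal is to use these to verify the two covariance relations $E_{\omega^{\mathfrak{r}}}\circ\alpha^{\mathfrak{r}}_g=\beta^{\mathfrak{r}}_g\circ E_{\omega^{\mathfrak{r}}}$ and $E_{\omega^{\mathfrak{r}}}\circ\sigma^{\mu_S}_t=\sigma^{\nu_L}_t\circ E_{\omega^{\mathfrak{r}}}$ that encode $\mathbf{A}^{\mathfrak{r}}\omega^{\mathfrak{r}}\mathbf{B}^{\mathfrak{r}}$.

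The calculations should rely on a small toolkit of elementary identities between the slice maps and embeddings: namely $\iota_{S,M}\circ P^{\mu_R}_S=P^{\mu_R}_{1\otimes S}$, $P^{\mu_R}_{1\otimes S}\circ\iota_{S,M}=\iota_{S,M}$, $P^{\nu_K}_L\circ P^{\nu_K}_{1\otimes L}=P^{\nu_K}_L$, and $\iota_{L,N}\circ P^{\nu_K}_L=P^{\nu_K}_{1\otimes L}$ (with analogous identities on the $\mathbf{A}$-side). For the evolution, I would compute
\begin{align*}
E_{\omega^{\mathfrak{r}}}\circ\alpha^{\mathfrak{r}}_g
&=P^{\nu_K}_L\circ E_\omega\circ\iota_{S,M}\circ P^{\mu_R}_S\circ\alpha_g\circ\iota_{S,M}\\
&=P^{\nu_K}_L\circ E_\omega\circ P^{\mu_R}_{1\otimes S}\circ\alpha_g\circ\iota_{S,M}\\
&=P^{\nu_K}_L\circ P^{\nu_K}_{1\otimes L}\circ E_\omega\circ\alpha_g\circ\iota_{S,M}
 =P^{\nu_K}_L\circ\beta_g\circ E_\omega\circ\iota_{S,M},
\end{align*}
and then rewrite $\beta^{\mathfrak{r}}_g\circ E_{\omega^{\mathfrak{r}}}$ similarly, using the conditional-expectation balance on the other side together with $P^{\mu_R}_{1\otimes S}\circ\iota_{S,M}=\iota_{S,M}$, to reach the same expression.

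For the modular balance, the key observation is that the modular group of a product state factorizes as $\sigma^\mu_t=\sigma^{\mu_R}_t\bar\otimes\sigma^{\mu_S}_t$ (and analogously for $\nu$). This immediately gives the intertwining identities $\sigma^\mu_t\circ\iota_{S,M}=\iota_{S,M}\circ\sigma^{\mu_S}_t$ and $P^{\nu_K}_L\circ\sigma^\nu_t=\sigma^{\nu_L}_t\circ P^{\nu_K}_L$, after which the modular covariance of $E_\omega$ propagates directly to $E_{\omega^{\mathfrak{r}}}$. No modular balance condition involving the conditional expectation is needed here.

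The main obstacle I anticipate is simply bookkeeping: making sure that each of the three hypotheses contained in $\mathbf{A}^{\mathfrak{p}}\omega\mathbf{B}^{\mathfrak{p}}$ is used where appropriate (the evolution covariance for $\alpha_g$, the conditional-expectation covariance on both sides to peel off the $\iota$'s, and the modular covariance for $\sigma_t$), and that the product-state hypothesis on $\mu$ and $\nu$ is explicitly invoked to commute the modular group through the slice maps. Once these identities are laid out, everything reduces to a straightforward chain of equalities, so the conceptual content really lies in seeing that the balance condition for the conditional expectations $P^{\mu_R}_{1\otimes S}$ and $P^{\nu_K}_{1\otimes L}$ is exactly what allows one to slide $E_\omega$ past the reduction maps in the definition of $\alpha^{\mathfrak{r}}_g$.
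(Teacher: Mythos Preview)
Your proposal is correct and follows essentially the same approach as the paper: both arguments invoke Lemma \ref{redOordPl} for the formula $E_{\omega^{\mathfrak{r}}}=P^{\nu_K}_L\circ E_\omega\circ\iota_{S,M}$, then use the same list of elementary identities among $\iota$, $P^{\mu_R}_S$, and $P^{\mu_R}_{1\otimes S}$ together with the three covariance relations from (\ref{BalE}) to verify the reduced evolution and modular balance. The only cosmetic difference is that the paper runs a single chain from $\beta_g^{\mathfrak{r}}\circ E_{\omega^{\mathfrak{r}}}$ to $E_{\omega^{\mathfrak{r}}}\circ\alpha_g^{\mathfrak{r}}$, whereas you compute both sides toward a common middle expression $P^{\nu_K}_L\circ\beta_g\circ E_\omega\circ\iota_{S,M}$.
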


\begin{proof}
Using Lemma \ref{redOordPl}, the given balance conditions, as well as
elementary relations of the form $\iota_{L,N}\circ P_{L}^{\nu_{K}}=P_{1\otimes
L}^{\nu_{K}}$, $P_{L}^{\nu_{K}}\circ P_{1\otimes L}^{\nu_{K}}=P_{L}^{\nu_{K}}$
and $P_{1\otimes S}^{\mu_{R}}\circ\iota_{S,M}=\iota_{S,M}$, it follows that
%
%"Optimale vervoer tussen dinamiese stelsels" bl. 85.
\begin{align*}
\beta_{g}^{\mathfrak{r}}\circ E_{\omega^{\mathfrak{r}}}  &  =P_{L}^{\nu_{K}
}\circ\beta_{g}\circ\iota_{L,N}\circ P_{L}^{\nu_{K}}\circ E_{\omega}\circ
\iota_{S,M}\\
&  =P_{L}^{\nu_{K}}\circ P_{1\otimes L}^{\nu_{K}}\circ\beta_{g}\circ
P_{1\otimes L}^{\nu_{K}}\circ E_{\omega}\circ\iota_{S,M}\\
&  =P_{L}^{\nu_{K}}\circ E_{\omega}\circ P_{1\otimes S}^{\mu_{R}}\circ
\alpha_{g}\circ P_{1\otimes S}^{\mu_{R}}\circ\iota_{S,M}\\
&  =P_{L}^{\nu_{K}}\circ E_{\omega}\circ\iota_{S,M}\circ P_{S}^{\mu_{R}}
\circ\alpha_{g}\circ\iota_{S,M}\\
&  =E_{\omega^{\mathfrak{r}}}\circ\alpha_{g}^{\mathfrak{r}}.
\end{align*}
Furthermore, since $\sigma_{t}^{\mu}=\sigma_{t}^{\mu_{R}}\bar{\otimes}
\sigma_{t}^{\mu_{S}}$ and $\sigma_{t}^{\nu}=\sigma_{t}^{\nu_{K}}\bar{\otimes
}\sigma_{t}^{\nu_{L}}$,%
\begin{align*}
\sigma_{t}^{\nu_{L}}\circ E_{\omega^{\mathfrak{r}}}  &  =P_{L}^{\nu_{K}}
\circ\sigma_{t}^{\nu}\circ E_{\omega}\circ\iota_{S,M}\\
&  =P_{L}^{\nu_{K}}\circ E_{\omega}\circ\sigma_{t}^{\mu}\circ\iota_{S,M}\\
&  =P_{L}^{\nu_{K}}\circ E_{\omega}\circ\iota_{S,M}\circ\sigma_{t}^{\mu_{S}}\\
&  =E_{\omega^{\mathfrak{r}}}\circ\sigma_{t}^{\mu_{S}},
\end{align*}
which completes the proof by (\ref{BalE}).
\end{proof}

In the remainder of the section, we proceed with our main interest, namely the
case
\[
M=N\text{, }R=K\text{ and }S=L.
\]

When using $T_{\sigma}(\mathbf{A},\mathbf{B})$ in Definition \ref{T(A,B)}, we
need to apply Proposition \ref{BalOordr} to the KMS-duals of the systems as
well, which leads us to the next proposition.

\begin{proposition}
\label{KMSkomMet_p&r}For every $\mathbf{A}\in X_{\otimes}$, we have $\left(
\mathbf{A}^{\mathfrak{p}}\right)  ^{\sigma}=\left(  \mathbf{A}^{\sigma
}\right)  ^{\mathfrak{p}}$ and $\left(  \mathbf{A}^{\mathfrak{r}}\right)
^{\sigma}=\left(  \mathbf{A}^{\sigma}\right)  ^{\mathfrak{r}}$.
%
%Ek het dit in "Optimale vervoer tussen dinamiese stelsels" skrif, bl. 98 vergeet. 
%So dit verskyn nie in my notas nie.
\end{proposition}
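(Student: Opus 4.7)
The plan is to reduce both equalities to a few KMS-dual computations enabled by the product structure of $\mu = \mu_R \bar{\otimes} \mu_S$. I would proceed in three steps.

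\emph{Step 1: Contravariance.} First I record that $(F \circ G)^\sigma = G^\sigma \circ F^\sigma$ for composable u.c.p.\ maps with matching state preservation. From \eqref{duaalDef} one immediately obtains $(F \circ G)' = G' \circ F'$, and then Definition \ref{KMS-duaal} gives the claim via $j^2 = \id$ cancelling in the middle of $j \circ (F\circ G)' \circ j$.

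\emph{Step 2: Three key KMS-duals.} In the standard form of $M = R \bar{\otimes} S$ with product state $\mu$, we have $\Lambda_\mu = \Lambda_{\mu_R} \otimes \Lambda_{\mu_S}$ and $J_\mu = J_{\mu_R} \otimes J_{\mu_S}$, so $j_\mu(r \otimes s) = j_{\mu_R}(r) \otimes j_{\mu_S}(s)$. Computing $E'$ on elementary tensors directly from \eqref{duaalDef} and then conjugating with the appropriate $j$'s yields
\[
(P_{1\otimes S}^{\mu_R})^\sigma = P_{1\otimes S}^{\mu_R},
\qquad
\iota_{S,M}^\sigma = P_S^{\mu_R},
\qquad
(P_S^{\mu_R})^\sigma = \iota_{S,M},
\]
where the third follows from the second via $(\cdot)^{\sigma\sigma} = \id$. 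The first says that the $\mu$-preserving conditional expectation is KMS-self-dual, while the second pair identifies the inclusion and the ``partial-trace'' map as KMS-duals of each other.

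\emph{Step 3: Matching components.} The state is preserved by $\sigma$ and untouched by $\mathfrak{p}$ and $\mathfrak{r}$, so it matches trivially. For the first equality, $(\mathbf{A}^\mathfrak{p})^\sigma$ and $(\mathbf{A}^\sigma)^\mathfrak{p}$ coincide in the $\alpha^\sigma$ and $(\sigma^\mu)^\sigma$ slots by construction, and in the conditional-expectation slot by the first identity of Step 2. For the second equality, the main calculation is the evolution slot:
\begin{align*}
(\alpha_g^{\mathfrak{r}})^\sigma
&= (P_S^{\mu_R} \circ \alpha_g \circ \iota_{S,M})^\sigma \\
&= \iota_{S,M}^\sigma \circ \alpha_g^\sigma \circ (P_S^{\mu_R})^\sigma \\
&= P_S^{\mu_R} \circ \alpha_g^\sigma \circ \iota_{S,M}
= (\alpha_g^\sigma)^{\mathfrak{r}},
\end{align*}
via Steps 1 and 2. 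The modular slot is handled by the analogous identity $P_S^{\mu_R} \circ \sigma_t^\mu \circ \iota_{S,M} = \sigma^{\mu_S}_t$, once the reduction is interpreted uniformly across all dynamics components; applied to $(\sigma^\mu)^\sigma_t = \sigma^\mu_{-t}$ this gives $\sigma^{\mu_S}_{-t}$, which matches the modular slot $(\sigma^{\mu_S})^\sigma$ of $(\mathbf{A}^\mathfrak{r})^\sigma$.

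The only real difficulty here is bookkeeping: each KMS-dual is taken with respect to a possibly different pair of states (since $\iota_{S,M}$ and $P_S^{\mu_R}$ go between different algebras $S$ and $M$), and one must also be careful to interpret the reduction uniformly so that it genuinely commutes with $\sigma$ on the modular slot. Once the product-state structure is exploited, however, each individual computation is short and requires no modular-theoretic input beyond the tensor factorization of $\Lambda_\mu$ and $J_\mu$.
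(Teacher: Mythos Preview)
Your proposal is correct and follows essentially the same route as the paper: compute the dual of $P_{1\otimes S}^{\mu_R}$ to see it is KMS-self-dual, compute $\iota_{S,M}^\sigma = P_S^{\mu_R}$ (hence $(P_S^{\mu_R})^\sigma = \iota_{S,M}$), and combine these with contravariance of $(\cdot)^\sigma$ to match the evolution slots. The paper carries out the three dual computations explicitly where you only assert them, and it dismisses the modular slot as ``trivial'' where you spell out the needed identity $P_S^{\mu_R}\circ\sigma_t^\mu\circ\iota_{S,M}=\sigma_t^{\mu_S}$; otherwise the arguments coincide.
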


\begin{proof}
To show $\left(  \mathbf{A}^{\mathfrak{p}}\right)  ^{\sigma}=\left(
\mathbf{A}^{\sigma}\right)  ^{\mathfrak{p}}$, we clearly just have to check
that $\left(  P_{1\otimes S}^{\mu_{R}}\right)  ^{\sigma}=P_{1\otimes S}
^{\mu_{R}}$, i.e., that $P_{1\otimes S}^{\mu_{R}}$ is KMS-symmetric (w.r.t.
$\mu$), since the remaining conditions are trivially satisfied. First note
%"Optimale vervoer tussen dinamiese stelsels" skrif, bl. 76.
that according to Definition \ref{KMS-duaal},
\[
\left(  P_{1\otimes S}^{\mu_{R}}\right)  '=P_{1\otimes S'
}^{\mu_{R}'},
\]
since for all $r\in R$, $r'\in R'$, $s\in S$ and $s'\in S'$,
\begin{align*}
&  \left\langle \Lambda_{\mu},(r\otimes s)\left(  (\mu_{R}'1_{R'})\otimes\id_{S'}(r'\otimes s')\right)  \Lambda_{\mu}\right\rangle \\
&  =\mu_{R}(r)\mu_{R}'(r')\left\langle \Lambda_{\mu_{S}
},ss'\Lambda_{\mu_{S}}\right\rangle \\
&  =\left\langle \Lambda_{\mu},\left(  (\mu_{R}1_{R})\otimes\id
_{S}(r\otimes s)\right)  (r'\otimes s')\Lambda_{\mu
}\right\rangle ,
\end{align*}
simply because $\Lambda_{\mu}=\Lambda_{\mu_{R}}\otimes\Lambda_{\mu_{S}}$. We
therefore have
\begin{align*}
\left(  P_{1\otimes S}^{\mu_{R}}\right)  ^{\sigma}  &  =j_{\mu}\circ
P_{1\otimes S'}^{\mu_{R}'}\circ j_{\mu}\\
&  =\left(  j_{\mu_{R}}\bar{\otimes}j_{\mu_{S}}\right)  \circ\left(  \left(
\mu_{R}'1_{R'}\right)  \bar{\otimes}\id
_{S'}\right)  \circ\left(  j_{\mu_{R}}\bar{\otimes}j_{\mu
_{S}}\right) \\
&  =\left[  \left(  \mu_{R}'\circ j_{\mu_{R}}\right)  j_{\mu_{R}
}(1_{R'})\right]  \bar{\otimes}\left(  j_{\mu_{S}}\circ
\id_{S'}\circ j_{\mu_{S}}\right) \\
&  =\left(  \mu_{R}1_{R}\right)  \bar{\otimes}\id
_{S}\\
&  =P_{1\otimes S}^{\mu_{R}}.
\end{align*}

Next we show $\left(  \mathbf{A}^{\mathfrak{r}}\right)  ^{\sigma}=\left(
\mathbf{A}^{\sigma}\right)  ^{\mathfrak{r}}$. In this case the only
non-trivial part is to check that $\left(  \alpha^{\mathfrak{r}}\right)
^{\sigma}=\left(  \alpha^{\sigma}\right)  ^{\mathfrak{r}}$. We have
\[
\left(  \alpha^{\mathfrak{r}}\right)  ^{\sigma}=\left(  P_{S}^{\mu_{R}}
\circ\alpha_{g}\circ\iota_{S,M}\right)  ^{\sigma}=\iota_{S,M}^{\sigma}
\circ\alpha_{g}^{\sigma}\circ\left(  P_{S}^{\mu_{R}}\right)  ^{\sigma}
\]
and
\[
\left(  \alpha^{\sigma}\right)  ^{\mathfrak{r}}=P_{S}^{\mu_{R}}\circ\alpha
_{g}^{\sigma}\circ\iota_{S,M}.
\]
To determine $\iota_{S,A}^{\sigma}:M\rightarrow S$, we calculate
\begin{align*}
\left\langle \Lambda_{\mu},\iota_{S,M}(s)\left(  r'\otimes s'\right)  \Lambda_{\mu}\right\rangle  &  =\left\langle \Lambda_{\mu},\left(
r'\otimes\left(  ss'\right)  \right)  \Lambda_{\mu
}\right\rangle \\
&  =\left\langle \Lambda_{\mu_{R}},r'\Lambda_{\mu_{R}}\right\rangle
\left\langle \Lambda_{\mu_{S}},ss'\Lambda_{\mu_{S}}\right\rangle \\
&  =\left\langle \Lambda_{\mu_{S}},s\mu_{R}'(r')s'\Lambda_{\mu_{S}}\right\rangle \\
&  =\left\langle \Lambda_{\mu_{S}},s\mu_{R}'\bar{\otimes
}\id_{S'}(r'\otimes s'
)\Lambda_{\mu_{S}}\right\rangle ,
\end{align*}
which by $\sigma$-weak continuity of $\mu_{R}'\bar{\otimes
}\id_{S'}$ (being the tensor product of normal
maps), along with Definition \ref{KMS-duaal}, means that
\[
\iota_{S,M}'=\mu_{R}'\bar{\otimes}\id
_{S'}=P_{S'}^{\mu_{R}'}:M'\rightarrow
S',
\]
which in turns tells us that
\[
\iota_{S,M}^{\sigma}=j_{\mu_{s}}\circ P_{S'}^{\mu_{R}'}\circ
j_{\mu}=P_{S}^{\mu_{R}},
\]
similar to the calculation for $\left(  P_{1\otimes S}^{\mu_{R}}\right)
^{\sigma}$ above. Thus, $\left(  P_{S}^{\mu_{R}}\right)  ^{\sigma}=\left(
\iota_{S,M}^{\sigma}\right)  ^{\sigma}=\iota_{S,M}$, hence $\left(
\alpha^{\mathfrak{r}}\right)  ^{\sigma}=\left(  \alpha^{\sigma}\right)
^{\mathfrak{r}}$.
\end{proof}

The next corollary, which now follows from Remark \ref{TvsTsig}, is not needed
in this section, but becomes relevant in the next. It implies that under the
given conditions, the modular balance conditions for reduced systems, 
reduce to balance between the modular dynamics.

\begin{corollary}
\label{redKMSvirGroep}If $\Gamma$ is a group and $\alpha$ a group
representation as $\ast$-automorphisms of $M$, then $\left(  \alpha
_{g}^{\mathfrak{r}}\right)  ^{\sigma}=\alpha_{g^{-1}}^{\mathfrak{r}}$ for all
$g\in\Gamma$.
%Dit verskyn nie in my notas nie. Bygevoeg tydens proeflees, 
%vir gebruik in Onderafd OAfdRed.
\end{corollary}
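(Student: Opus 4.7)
The plan is to derive the corollary as a direct consequence of the just-proved Proposition \ref{KMSkomMet_p&r}, combined with the KMS-dual formula for $\ast$-automorphisms recorded in Remark \ref{TvsTsig}. No additional computation with $j_\mu$, modular conjugations, or conditional expectations is needed at this stage, since all of that was absorbed into the preceding proposition.

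First, I would read Proposition \ref{KMSkomMet_p&r} at the level of the individual maps indexed by $g$: since the identity $(\mathbf{A}^{\mathfrak{r}})^\sigma=(\mathbf{A}^\sigma)^{\mathfrak{r}}$ holds as systems, and the dynamics part of each side is $((\alpha_g^{\mathfrak{r}})^\sigma)_{g\in\Gamma}$ and $((\alpha_g^\sigma)^{\mathfrak{r}})_{g\in\Gamma}$ respectively, matching components gives
\[
(\alpha_g^{\mathfrak{r}})^\sigma = (\alpha_g^\sigma)^{\mathfrak{r}}
\]
for every $g\in\Gamma$. This reduces the problem to identifying $\alpha_g^\sigma$ under the new hypotheses.

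Next, I would invoke the new assumption: $\alpha$ is a group representation of $\Gamma$ by $\ast$-automorphisms, and each $\alpha_g$ preserves $\mu$. The formula $\tau^\sigma=\tau^{-1}$ recalled in Remark \ref{TvsTsig} then applies to every $\alpha_g$, so $\alpha_g^\sigma=\alpha_g^{-1}$, and the group-representation property turns this into $\alpha_g^\sigma=\alpha_{g^{-1}}$. Substituting back yields
\[
(\alpha_g^{\mathfrak{r}})^\sigma = (\alpha_{g^{-1}})^{\mathfrak{r}} = \alpha_{g^{-1}}^{\mathfrak{r}},
\]
which is the claim.

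I do not expect a real obstacle here, precisely because the hard work sits in Proposition \ref{KMSkomMet_p&r}, where the KMS-symmetry of $P_{1\otimes S}^{\mu_R}$ and the identification $\iota_{S,M}^\sigma=P_S^{\mu_R}$ are handled. The present step is just the observation that once one restricts to group representations by $\ast$-automorphisms, the abstract KMS-dual collapses to the inverse, so the reduction commutes with KMS-duality in the strong form $(\alpha_g^{\mathfrak{r}})^\sigma=\alpha_{g^{-1}}^{\mathfrak{r}}$.
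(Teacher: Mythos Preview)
Your argument is correct and matches the paper's approach exactly: the paper simply states that the corollary ``now follows from Remark \ref{TvsTsig}'', meaning that one combines the identity $(\alpha_g^{\mathfrak{r}})^\sigma=(\alpha_g^\sigma)^{\mathfrak{r}}$ from the just-proved Proposition \ref{KMSkomMet_p&r} with the formula $\tau^\sigma=\tau^{-1}$ for $\ast$-automorphisms from Remark \ref{TvsTsig}, precisely as you spelled out.
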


To connect the transport cost function for reduced systems to the cost for
composite systems, we need the following:

\begin{lemma}
\label{kosteGelyk}Consider $\mu=\mu_{R}\bar{\otimes}\mu_{S},\nu=\nu_{R}
\bar{\otimes}\nu_{S}\in\mathfrak{F}(M)$ as before. For $\omega\in T\left(
\mu,\nu\right)  $, and assuming the balance condition
\[
\left(  M,P_{1\otimes S}^{\mu_{R}},\mu\right)  \omega\left(  M,P_{1\otimes
S}^{\nu_{R}},\nu\right)  ,
\]
it follows that
\[
\nu\left(  k^{\ast}E_{\omega}(k)\right)  =\nu_{S}\left(  w^{\ast}
E_{\omega^{\mathfrak{r}}}(w)\right)
\]
for any $w\in S$ and $k:=1\otimes w\in M$.
%
%Sien "Optimale vervoer tussen dinamiese stelsels" skrif, bl. 82-83.
\end{lemma}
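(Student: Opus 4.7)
The plan is to exploit the balance condition to show that $E_\omega(1_R\otimes w)$ already lies in $1_R\otimes S$, after which the identity reduces to a straightforward computation using the product-state structure of $\nu$.

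First I would observe that $k = 1_R \otimes w = \iota_{S,M}(w)$, and that $P_{1\otimes S}^{\mu_R}(1_R\otimes w) = \mu_R(1_R)\cdot 1_R\otimes w = 1_R\otimes w$. The balance condition, rephrased as a covariance via (\ref{BalE}), says $E_\omega \circ P_{1\otimes S}^{\mu_R} = P_{1\otimes S}^{\nu_R}\circ E_\omega$, so
\[
E_\omega(1_R\otimes w) = E_\omega\bigl(P_{1\otimes S}^{\mu_R}(1_R\otimes w)\bigr) = P_{1\otimes S}^{\nu_R}\bigl(E_\omega(1_R\otimes w)\bigr).
\]
Since $P_{1\otimes S}^{\nu_R}$ has range $1_R\otimes S$, this shows $E_\omega(1_R\otimes w) = 1_R\otimes u$ for some $u\in S$.

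Next I would identify $u$. Applying $P_S^{\nu_R} = \nu_R\bar\otimes\id_S$ to both sides gives $u = P_S^{\nu_R}(E_\omega(1_R\otimes w)) = P_S^{\nu_R}\circ E_\omega\circ\iota_{S,M}(w)$, which by Lemma \ref{redOordPl} (with $K=R$, $L=S$) equals $E_{\omega^{\mathfrak{r}}}(w)$. Thus
\[
E_\omega(1_R\otimes w) = 1_R\otimes E_{\omega^{\mathfrak{r}}}(w).
\]

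Finally, the computation is immediate: multiplying on the left by $k^* = 1_R\otimes w^*$ yields $k^*E_\omega(k) = 1_R\otimes w^*E_{\omega^{\mathfrak{r}}}(w)$, and since $\nu = \nu_R\bar\otimes\nu_S$,
\[
\nu(k^*E_\omega(k)) = \nu_R(1_R)\,\nu_S\bigl(w^*E_{\omega^{\mathfrak{r}}}(w)\bigr) = \nu_S\bigl(w^*E_{\omega^{\mathfrak{r}}}(w)\bigr),
\]
as required. There is no real obstacle here; the only substantive content is recognizing that the balance condition forces $E_\omega$ to send $1_R\otimes S$ into $1_R\otimes S$, after which Lemma \ref{redOordPl} and the factorization of $\nu$ finish the job.
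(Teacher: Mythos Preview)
Your proof is correct. The paper takes a slightly different route: rather than first showing that $E_\omega(1_R\otimes w)$ lands in $1_R\otimes S$, it establishes the module-type identity $P_S^{\nu_R}\bigl(\iota_{S,M}(s)a\bigr) = s\,P_S^{\nu_R}(a)$ for arbitrary $a\in M$ (checked on elementary tensors and extended by normality), deduces $\nu\bigl(\iota_{S,M}(s)a\bigr) = \nu_S\bigl(s\,P_S^{\nu_R}(a)\bigr)$, and only then specializes to $a = E_\omega\circ\iota_{S,M}(w)$, invoking Lemma~\ref{redOordPl} at the end. Your argument instead uses the balance condition twice---once directly via (\ref{BalE}) to force $E_\omega(k)\in 1_R\otimes S$, and once through Lemma~\ref{redOordPl}---and thereby bypasses the conditional-expectation module identity and its normality verification entirely. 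Your approach is more direct for this particular lemma; the paper's intermediate identity is a bit more general (it holds for all $a\in M$, not just those of the form $E_\omega\circ\iota_{S,M}(w)$), though that extra generality is not exploited elsewhere.
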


\begin{proof}
Note that for $r\in R$ and $s_{1},s_{2}\in S$, one has
\[
P_{S}^{\mu_{R}}\left(  (1\otimes s_{1})(r\otimes s_{2})\right)  =\mu
_{R}(r)s_{1}s_{2}=s_{1}P_{S}^{\mu_{R}}\left(  r\otimes s_{2}\right)  .
\]
Since $P_{1\otimes S}^{\mu_{R}}$ is normal (it is the tensor product of normal
maps) and multiplication of operators is $\sigma$-weakly continuous in each
factor separately, we know that $P_{S}^{\mu_{R}}\left(  (1\otimes
s)(\cdot)\right)  :M\rightarrow S$ is normal. Hence
\[
P_{S}^{\mu_{R}}\left(  \iota_{S,M}(s)a\right)  =sP_{S}^{\mu_{R}}\left(
a\right)
\]
for all $a\in M$ and $s\in S$. Consequently
\[
\nu\left(  \iota_{S,M}(s)a\right)  =\nu_{S}\circ P_{S}^{\nu_{R}}\left(
\iota_{S,M}(s)a\right)  =\nu_{S}\left(  sP_{S}^{\nu_{R}}\left(  a\right)
\right)  .
\]

In particular, because of the balance assumption,%
\[
\nu\left(  \iota_{S,M}(s)E_{\omega}\circ\iota_{S,M}(w)\right)  =\nu_{S}\left(
sP_{S}^{\nu_{R}}\circ E_{\omega}\circ\iota_{S,M}(w)\right)  =\nu_{S}\left(
sE_{\omega^{\mathfrak{r}}}(w)\right)
\]
by Lemma \ref{redOordPl}, which for the special case $s=w^{\ast}$ gives
$\nu\left(  k^{\ast}E_{\omega}(k)\right)  =\nu_{S}\left(  w^{\ast}
E_{\omega^{\mathfrak{r}}}(w)\right)  $.
\end{proof}

The ideas of this section can now be brought together by the next definition
and result, which show how Wasserstein distances on the set $Y$ of reduced systems
relate to Wasserstein distances on the set $X_{\otimes}$ of composite systems. Note that in
the following definition we restrict the modular Wasserstein pseudometrics
given by Theorem \ref{metriek}, to $X_{\otimes}$ and $X_{\otimes
}^{\mathfrak{p}}$ respectively.
%
%Sien "Optimale vervoer tussen dinamiese stelsels" skrif, bl. 94-98, 
%maar die aspekte rondom Prop KMSkomMet_p&r verskyn nie in my skrif nie.

\begin{definition}
\label{redW2}In terms of the modular Wasserstein pseudometric $W_{\sigma}$ on
$X_{\otimes}^{\mathfrak{p}}$ associated to given $k_{1},...,k_{n}\in M$, the
corresponding \emph{reduced} modular Wasserstein pseudometric $W_{\sigma
}^{\mathfrak{r}}$ on $X_{\otimes}$, associated to $k_{1},...,k_{n}$, is
defined by
\[
W_{\sigma}^{\mathfrak{r}}\left(  \mathbf{A},\mathbf{B}\right)  =W_{\sigma
}\left(  \mathbf{A}^{\mathfrak{p}},\mathbf{B}^{\mathfrak{p}}\right)
\]
for all $\mathbf{A},\mathbf{B}\in X_{\otimes}$.
\end{definition}

\begin{theorem}
Consider the modular Wasserstein pseudometric $W_{\sigma}$ on $Y$ associated
to $w_{1},...,w_{n}\in S$, and the reduced modular Wasserstein pseudometric
$W_{\sigma}^{\mathfrak{r}}$ on $X_{\otimes}$ associated to $k_{1},...,k_{n}\in
M$ given by
\[
k_{l}:=1_{R}\otimes w_{l}
\]
for $l=1,...,n$. Then
\[
W_{\sigma}\left(  \mathbf{A}^{\mathfrak{r}},\mathbf{B}^{\mathfrak{r}}\right)
\leq W_{\sigma}^{\mathfrak{r}}\left(  \mathbf{A},\mathbf{B}\right)
\]
for all $\mathbf{A},\mathbf{B}\in X_{\otimes}$.
\end{theorem}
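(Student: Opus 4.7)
The plan is to unfold both sides directly from the definitions and exhibit a cost-preserving map $\omega\mapsto\omega^{\mathfrak{r}}$ from $T_{\sigma}(\mathbf{A}^{\mathfrak{p}},\mathbf{B}^{\mathfrak{p}})$ into $T_{\sigma}(\mathbf{A}^{\mathfrak{r}},\mathbf{B}^{\mathfrak{r}})$. By Definition \ref{redW2}, $W_{\sigma}^{\mathfrak{r}}(\mathbf{A},\mathbf{B})=W_{\sigma}(\mathbf{A}^{\mathfrak{p}},\mathbf{B}^{\mathfrak{p}})$, which is the infimum of $I_{k}(\omega)^{1/2}$ over plans $\omega$ satisfying all the modular balance conditions for the augmented systems, in particular the one involving the conditional expectation $P_{1\otimes S}^{\mu_{R}}$. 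For each such $\omega$, I form its reduction $\omega^{\mathfrak{r}}$ from Lemma \ref{redOordPl}, and then verify two things: that $\omega^{\mathfrak{r}}$ actually lies in $T_{\sigma}(\mathbf{A}^{\mathfrak{r}},\mathbf{B}^{\mathfrak{r}})$, and that $I_{k}(\omega)=I_{w}(\omega^{\mathfrak{r}})$ with $w=(w_{1},\ldots,w_{n})$.

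For membership, Proposition \ref{BalOordr} applied to $\omega$ directly supplies $\mathbf{A}^{\mathfrak{r}}\omega^{\mathfrak{r}}\mathbf{B}^{\mathfrak{r}}$, which encodes balance for both the evolutions and the modular dynamics (since $\sigma^{\mu}=\sigma^{\mu_{R}}\bar{\otimes}\sigma^{\mu_{S}}$ already sits inside $\mathbf{A}^{\mathfrak{p}}$, its reduction produces $\sigma^{\mu_{S}}$). The KMS-dual condition $(\mathbf{A}^{\mathfrak{r}})^{\sigma}\omega^{\mathfrak{r}}(\mathbf{B}^{\mathfrak{r}})^{\sigma}$ requires combining Proposition \ref{KMSkomMet_p&r}, which identifies $(\mathbf{A}^{\mathfrak{p}})^{\sigma}$ with $(\mathbf{A}^{\sigma})^{\mathfrak{p}}$, with a second application of Proposition \ref{BalOordr} to the KMS-dual augmented systems $(\mathbf{A}^{\sigma})^{\mathfrak{p}}$ and $(\mathbf{B}^{\sigma})^{\mathfrak{p}}$; the second half of Proposition \ref{KMSkomMet_p&r} then re-identifies the resulting reduction as the desired condition on $(\mathbf{A}^{\mathfrak{r}})^{\sigma}$ and $(\mathbf{B}^{\mathfrak{r}})^{\sigma}$.

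For the cost equality, with $k_{l}=1_{R}\otimes w_{l}$ and the product states $\mu=\mu_{R}\bar{\otimes}\mu_{S}$, $\nu=\nu_{R}\bar{\otimes}\nu_{S}$, one has immediately $\mu(k_{l}^{\ast}k_{l})=\mu_{S}(w_{l}^{\ast}w_{l})$ and $\nu(k_{l}^{\ast}k_{l})=\nu_{S}(w_{l}^{\ast}w_{l})$. Lemma \ref{kosteGelyk} supplies $\nu(k_{l}^{\ast}E_{\omega}(k_{l}))=\nu_{S}(w_{l}^{\ast}E_{\omega^{\mathfrak{r}}}(w_{l}))$, and taking complex conjugates of this identity (using $\phi(a^{\ast})=\overline{\phi(a)}$ for a state $\phi$) yields the mirror version $\nu(E_{\omega}(k_{l})^{\ast}k_{l})=\nu_{S}(E_{\omega^{\mathfrak{r}}}(w_{l})^{\ast}w_{l})$. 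Summing the four matching terms over $l$ gives $I_{k}(\omega)=I_{w}(\omega^{\mathfrak{r}})$.

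With these two ingredients, the inclusion $\{\omega^{\mathfrak{r}}:\omega\in T_{\sigma}(\mathbf{A}^{\mathfrak{p}},\mathbf{B}^{\mathfrak{p}})\}\subseteq T_{\sigma}(\mathbf{A}^{\mathfrak{r}},\mathbf{B}^{\mathfrak{r}})$ together with the cost equality force the desired inequality by taking the infimum over $T_{\sigma}(\mathbf{A}^{\mathfrak{r}},\mathbf{B}^{\mathfrak{r}})$ on the left and over $T_{\sigma}(\mathbf{A}^{\mathfrak{p}},\mathbf{B}^{\mathfrak{p}})$ on the right. I expect the main obstacle to be the bookkeeping in the membership step — confirming that \emph{every} modular balance condition defining $T_{\sigma}(\mathbf{A}^{\mathfrak{r}},\mathbf{B}^{\mathfrak{r}})$ is inherited, which is precisely what forces the coordinated use of Propositions \ref{BalOordr} and \ref{KMSkomMet_p&r}. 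The cost equality itself is routine once the complex-conjugate trick for the asymmetric term is noted.
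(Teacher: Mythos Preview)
Your proposal is correct and follows essentially the same approach as the paper's own proof: reduce each $\omega\in T_{\sigma}(\mathbf{A}^{\mathfrak{p}},\mathbf{B}^{\mathfrak{p}})$ to $\omega^{\mathfrak{r}}$, use Propositions \ref{BalOordr} and \ref{KMSkomMet_p&r} (the latter twice, once in each direction) to land in $T_{\sigma}(\mathbf{A}^{\mathfrak{r}},\mathbf{B}^{\mathfrak{r}})$, and invoke Lemma \ref{kosteGelyk} for the cost equality $I_{k}(\omega)=I_{w}(\omega^{\mathfrak{r}})$. Your explicit handling of the four terms in $I_{k}$, including the complex-conjugate trick for the asymmetric term, spells out what the paper leaves implicit in its one-line appeal to Lemma \ref{kosteGelyk}, and your observation that the modular balance condition $(\sigma^{\mu_{S}},\mu_{S})\omega^{\mathfrak{r}}(\sigma^{\nu_{S}},\nu_{S})$ is already contained in $\mathbf{A}^{\mathfrak{r}}\omega^{\mathfrak{r}}\mathbf{B}^{\mathfrak{r}}$ corresponds to the paper's citation of Remark \ref{TvsTsig}.
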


\begin{proof}
For any $\omega\in T\left(  \mathbf{A}^{\mathfrak{p}},\mathbf{B}
^{\mathfrak{p}}\right)  $, and writing $w=(w_{1},...,w_{n})$ and
$k=(k_{1},...,k_{n})$, we have
\[
I_{w}(\omega^{\mathfrak{r}})=I_{k}(\omega)
\]
in terms of (\ref{Ialt}), because of Lemma \ref{kosteGelyk} and $\omega
^{\mathfrak{r}}\in T(\mu_{s},\nu_{S})$.

In addition, $\mathbf{A}^{\mathfrak{r}}\omega^{\mathfrak{r}}\mathbf{B}
^{\mathfrak{r}}$ by Proposition \ref{BalOordr}. Assuming $\left(
\mathbf{A}^{\mathfrak{p}}\right)  ^{\sigma}\omega\left(  \mathbf{B}
^{\mathfrak{p}}\right)  ^{\sigma}$ as well, we know from Proposition
\ref{KMSkomMet_p&r} that $\left(  \mathbf{A}^{\sigma}\right)  ^{\mathfrak{p}
}\omega\left(  \mathbf{B}^{\sigma}\right)  ^{\mathfrak{p}}$, hence $\left(
\mathbf{A}^{\sigma}\right)  ^{\mathfrak{r}}\omega^{\mathfrak{r}}\left(
\mathbf{B}^{\sigma}\right)  ^{\mathfrak{r}}$, giving $\left(  \mathbf{A}
^{\mathfrak{r}}\right)  ^{\sigma}\omega^{\mathfrak{r}}\left(  \mathbf{B}
^{\mathfrak{r}}\right)  ^{\sigma}$, by Propositions \ref{BalOordr} and
\ref{KMSkomMet_p&r}. Recalling Remark \ref{TvsTsig}, this means that
\[
\left\{  \omega^{\mathfrak{r}}:\omega\in T_{\sigma}(\mathbf{A}^{\mathfrak{p}
},\mathbf{B}^{\mathfrak{p}})\right\}  \subset T_{\sigma}(\mathbf{A}
^{\mathfrak{r}},\mathbf{B}^{\mathfrak{r}}).
\]

In view of Definition \ref{redW2}, these two facts imply the result.
\end{proof}

Of course, assuming that $\left\{  w_{1},...,w_{n}\right\}  ^{\ast}=\left\{
w_{1},...,w_{n}\right\}  $, and that this set generates $S$, the associated
$W_{\sigma}$ on $Y$ in this theorem is in fact a metric according to Theorem
\ref{metriek}.

An analogous result is achieved for $W$ instead of $W_{\sigma}$. Simply
consider the set $\mathsf{X}_{\otimes}$ of systems $\mathbf{A}=\left(
M,\alpha,\mu\right)  $ of the form (\ref{basVorm}), but with $\sigma^{\mu}$
dropped, and $\mathsf{X}_{\otimes}^{\mathfrak{p}}$ consisting of the
corresponding augmented systems $\mathbf{A}^{\mathfrak{p}}=\left(
M,\alpha,P_{1\otimes S}^{\mu_{R}},\mu\right)  $. Similarly, define $\mathsf{Y}$
as the set of corresponding reduced systems $\mathbf{A}^{\mathfrak{r}}=\left(
S,\alpha^{\mathfrak{r}},\mu_{S}\right)  $. Define the \emph{reduced}
asymmetric Wasserstein pseudometric $W^{\mathfrak{r}}\left(  \mathbf{A}
,\mathbf{B}\right)  :=W\left(  \mathbf{A}^{\mathfrak{p}},\mathbf{B}
^{\mathfrak{p}}\right)  $ for all $\mathbf{A},\mathbf{B}\in\mathsf{X}
_{\otimes}$, from the asymmetric Wasserstein pseudometric $W$ on
$\mathsf{X}_{\otimes}^{\mathfrak{p}}$ associated to $k_{1},...,k_{n}\in M$,
given by Theorem \ref{asim}. Remove aspects related to KMS-duals from the
preceding proof. Then we analogously have the following result:

\begin{theorem}
Consider the modular Wasserstein pseudometric $W$ on $\mathsf{Y}$ associated
to $w_{1},...,w_{n}\in S$, and the reduced modular Wasserstein pseudometric
$W^{\mathfrak{r}}$ on $\mathsf{X}_{\otimes}$ associated to $k_{1},...,k_{n}\in
M$ given by
\[
k_{l}:=1_{R}\otimes w_{l}
\]
for $l=1,...,n$. Then
\[
W\left(  \mathbf{A}^{\mathfrak{r}},\mathbf{B}^{\mathfrak{r}}\right)  \leq
W^{\mathfrak{r}}\left(  \mathbf{A},\mathbf{B}\right)
\]
for all $\mathbf{A},\mathbf{B}\in\mathsf{X}_{\otimes}$.
\end{theorem}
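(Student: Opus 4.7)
The plan is to follow the same strategy as the proof of the previous (modular) theorem, but strip out everything involving $\sigma^\mu$, $\sigma^\nu$ and KMS-duals, since those pieces are exactly what the asymmetric setting is meant to discard. So I would fix $\mathbf{A},\mathbf{B}\in\mathsf{X}_\otimes$ and, for each $\omega\in T(\mathbf{A}^{\mathfrak{p}},\mathbf{B}^{\mathfrak{p}})$, consider the reduced transport plan
\[
\omega^{\mathfrak{r}} := \omega\circ\left(\iota_{S,M}\odot\iota_{S',M'}\right)\in T(\mu_S,\nu_S),
\]
as in Lemma \ref{redOordPl} (applied with $N=M$, $K=R$, $L=S$).

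The first key step is to identify the cost of $\omega^{\mathfrak{r}}$ with that of $\omega$. Since $\omega$ satisfies the single balance condition $(M,P_{1\otimes S}^{\mu_R},\mu)\omega(M,P_{1\otimes S}^{\nu_R},\nu)$ built into $\mathbf{A}^{\mathfrak{p}}\omega\mathbf{B}^{\mathfrak{p}}$, Lemma \ref{kosteGelyk} applies to each $w_l$ and $k_l = 1_R\otimes w_l$, and summing over $l$ together with $\omega^{\mathfrak{r}}\in T(\mu_S,\nu_S)$ yields $I_w(\omega^{\mathfrak{r}}) = I_k(\omega)$ by inspection of (\ref{Ialt}). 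The second key step is that Proposition \ref{BalOordr} immediately gives $\mathbf{A}^{\mathfrak{r}}\omega^{\mathfrak{r}}\mathbf{B}^{\mathfrak{r}}$, so $\omega^{\mathfrak{r}}\in T(\mathbf{A}^{\mathfrak{r}},\mathbf{B}^{\mathfrak{r}})$. No KMS-dual balance or modular balance has to be propagated through the reduction, which is precisely the simplification promised in the remark before the statement.

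Combining these two facts, we obtain the set inclusion
\[
\left\{\omega^{\mathfrak{r}}:\omega\in T(\mathbf{A}^{\mathfrak{p}},\mathbf{B}^{\mathfrak{p}})\right\}\subset T(\mathbf{A}^{\mathfrak{r}},\mathbf{B}^{\mathfrak{r}}),
\]
together with the equality $I_w(\omega^{\mathfrak{r}})=I_k(\omega)$ on this subset. Taking the infimum on the right over all $\omega\in T(\mathbf{A}^{\mathfrak{p}},\mathbf{B}^{\mathfrak{p}})$ produces $W^{\mathfrak{r}}(\mathbf{A},\mathbf{B})$ by definition, while the infimum of $I_w(\cdot)^{1/2}$ over the possibly larger set $T(\mathbf{A}^{\mathfrak{r}},\mathbf{B}^{\mathfrak{r}})$ is at most as large and equals $W(\mathbf{A}^{\mathfrak{r}},\mathbf{B}^{\mathfrak{r}})$; the desired inequality follows.

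I do not expect any genuine obstacle: the heavy lifting (continuity of the relevant u.c.p.\ and reduction maps, existence of $E_{\omega^{\mathfrak{r}}}$, the cost identity, and the covariance form of balance) is already packaged in Lemmas \ref{redOordPl} and \ref{kosteGelyk} and Proposition \ref{BalOordr}. The only thing to be a little careful about is that the definitions of $\mathsf{X}_\otimes$, $\mathsf{X}_\otimes^{\mathfrak{p}}$ and $\mathsf{Y}$ are used consistently (no $\sigma^\mu$ among the dynamics), so that $T(\mathbf{A}^{\mathfrak{p}},\mathbf{B}^{\mathfrak{p}})$ really denotes the balance set for the two dynamics $\alpha$ and $P^{\mu_R}_{1\otimes S}$ only, and $T(\mathbf{A}^{\mathfrak{r}},\mathbf{B}^{\mathfrak{r}})$ the balance set for $\alpha^{\mathfrak{r}}$ only. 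With those bookkeeping conventions in place, the proof reduces essentially to citing the three preceding results in sequence and then taking the infimum.
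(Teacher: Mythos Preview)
Your proposal is correct and follows essentially the same route as the paper: the paper explicitly instructs one to repeat the proof of the preceding (modular) theorem while ``removing aspects related to KMS-duals,'' which is precisely what you do by invoking Lemma~\ref{kosteGelyk} for the cost identity $I_w(\omega^{\mathfrak{r}})=I_k(\omega)$, Proposition~\ref{BalOordr} for the inclusion $\{\omega^{\mathfrak{r}}:\omega\in T(\mathbf{A}^{\mathfrak{p}},\mathbf{B}^{\mathfrak{p}})\}\subset T(\mathbf{A}^{\mathfrak{r}},\mathbf{B}^{\mathfrak{r}})$, and then taking infima. Your bookkeeping remark about the sans-serif sets $\mathsf{X}_\otimes$, $\mathsf{X}_\otimes^{\mathfrak{p}}$, $\mathsf{Y}$ having no modular dynamics is exactly the care needed so that the cited lemmas (whose proofs treat the $\alpha$-balance and $\sigma^\mu$-balance parts separately) apply verbatim.
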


%Veelvoudige tensorproduk, en metriek op die stelsels daarop, wat die metrieke op 
%elke gereduseerde stelsel domineer.
%Bewys sal presies dies idees gebruik, maar is dit die moeite werd om in te sluit?

\section{Examples\label{AfdVbe}}

In this section we present a few simple but enlightening examples to provide
some insight into the Wasserstein distances. As Wasserstein distances are
usually defined on the states in the classical case (i.e., on probability
measures), we are instead particularly interested in the behaviour of the
Wasserstein metrics of this paper with regards to the dynamics, including the
modular dynamics.

We investigate how sharply these distances distinguish different dynamics,
given the same pair of states. As will be seen, not necessarily very sharply,
indicating that Wasserstein distance between systems chiefly measures
distances between the states of systems, but in a way that is regulated by the
dynamics. This may be expected from the definition. An important related point
is that balance conditions can cause jumps in values of the Wasserstein
distances. In particular this can happen for modular balance, indicating that
the latter condition may be unnatural in some ways when we are interested in
distances between the states themselves, not between systems. On the other
hand, it will be argued that these jumps should be of value in discerning
qualitative differences between the dynamics of two systems.

The other point we make in this section, is that symmetry of a Wasserstein
distance can indeed be lost if balance with respect to the modular dynamics is
not included.

We divide the section into subsections. The first concerns a very simple conceptual
point used in the examples, the last gives a brief summary of the
conclusions and possible implications, while the others consider the examples
in turn.

\subsection{Equal distance}

%"Optimale vervoer tussen dinamiese stelsels" bl. 106-.

Given $\Upsilon$ and the $Z_{\upsilon}$'s, let $D_{\mu}$ denote the set of all
$\alpha$'s appearing in Definition \ref{stelsel}. Given $\alpha_{1},\alpha
_{2}\in D_{\mu}$ and $\beta_{1},\beta_{2}\in D_{\nu}$, write
\[
\mathbf{A}_{l} = \left( M,\alpha_{l},\mu \right) 
\text{ and } 
\mathbf{B}_{l} = \left( M,\beta_{l},\nu \right)
\]
for $l=1,2$. 
Note that if
\begin{equation}
T_{\sigma} \left( \mathbf{A}_{1},\mathbf{B}_{1} \right) =
T_{\sigma} \left( \mathbf{A}_{2},\mathbf{B}_{2} \right),
\label{ekwPare}
\end{equation}
then
\[
W_{\sigma}\left(  \mathbf{A}_{1},\mathbf{B}_{1}\right)  =W_{\sigma}\left(
\mathbf{A}_{2},\mathbf{B}_{2}\right)
\]
for any modular Wasserstein pseudometric on $X$. This is not a particularly
powerful condition, but since it does not refer to details regarding the
transport cost function, it can give us general conclusions independent of
transport cost. It can also be comparatively straightforward to check for
simple systems. The analogous remarks are of course also true for $W$.

\subsection{Unitary dynamics in $M_{2}$\label{OAfdUn}}

%"Optimale vervoer tussen dinamiese stelsels" bl. 46 en 58 gee die basiese 
%agtergrond en opset.

Consider the $2\times2$ complex matrices $M_{2}$. It is simpler to work
directly in this representation, instead of the standard form $M=M_{2}
\otimes1_{2}$ with $1_{2}$ the $2\times2$ identity matrix. We use
$\Upsilon=\{1\}$, and $Z_{1}=\mathbb{Z}$ in the notation of
Definition \ref{stelsel}, and view the modular dynamics
separately, as we did in Section \ref{AfdW2}. Given a fixed $\theta
\in\mathbb{R}$, we define systems $\mathbf{A}_{\theta}=\left(  \alpha_{\theta
},\mu\right)  $ and $\mathbf{B}_{\theta}=\left(  \alpha_{\theta},\nu\right)  $
on $M_{2}$ by the dynamics
\[
\alpha_{\theta}(a)=U_{\theta}aU_{\theta}^{\ast}\text{ }
\]
acting through iteration (hence $Z_{1}=\mathbb{Z}$ above) and the states
\[
\mu(a)=\Tr(\zeta a)\text{ and }\nu(a)=\Tr(\eta
a),
\]
where $U_{\theta}$ is the unitary matrix
\[
U_{\theta}=\left[
\begin{array}
[c]{cc}%
1 & 0\\
0 & e^{i\theta}
\end{array}
\right]
\]
for every $\theta\in\mathbb{R}$, and $\zeta$ and $\eta$ are the density
matrices%
\[
\zeta=\left[
\begin{array}
[c]{cc}
p_{1} & 0\\
0 & p_{2}
\end{array}
\right]  \text{ and }\eta=\left[
\begin{array}
[c]{cc}
q_{1} & 0\\
0 & q_{2}
\end{array}
\right]
\]
with $0<p_{1},q_{1}<1$ and $p_{1}+p_{2}=q_{1}+q_{2}=1$. Write any $\omega\in
T(\mu,\nu)$ in terms of its $4\times4$ density matrix $\kappa=\left[
\kappa_{kl}\right]  $, that is,
\[
\omega(c)=\Tr(\kappa c)
\]
for all $c\in M_{2}\odot M_{2}$. Note that $M_{2}\odot M_{2}$ is the correct way to
express $M\odot M'$ in our current representation.

We first want to determine the effect of the balance condition $\mathbf{A}
_{\phi}\omega\mathbf{B}_{\theta}$ by itself on $\kappa$, without making any
other assumptions about the matrix $\kappa$. For the moment we therefore assume that
$\kappa=\left[  \kappa_{lm}\right]  $ is an arbitrary $4\times4$ complex
matrix, defining a linear functional $\omega=\Tr(\kappa\cdot)$
on $M_{2}\odot M_{2}$. Then it is elementary to check that the restrictions
placed on $\kappa$ by the balance condition $\mathbf{A}_{\phi}\omega
\mathbf{B}_{\theta}$, is described by the following:
%Dit volg uit die formules op bl. 110 van "Optimale vervoer tussen dinamiese stelsels".

\begin{enumerate}
\item[(a)] $e^{i\phi}\neq1$ implies $\kappa_{13}=\kappa_{31}=\kappa
_{24}=\kappa_{42}=0$, while $e^{i\phi}=1$ has no implications for $\kappa$.

\item[(b)] $e^{i\theta}\neq1$ implies $\kappa_{12}=\kappa_{21}=\kappa
_{34}=\kappa_{43}=0$, while $e^{i\theta}=1$ has no implications for $\kappa$.

\item[(c)] $e^{i\phi}\neq e^{i\theta}$ implies $\kappa_{14}=\kappa_{41}=0$,
while $e^{i\phi}=e^{i\theta}$ has no implications for $\kappa$.

\item[(d)] $e^{i\phi}\neq e^{-i\theta}$ implies $\kappa_{23}=\kappa_{32}=0$,
while $e^{i\phi}=e^{-i\theta}$ has no implications for $\kappa$.
\end{enumerate}

Since $\alpha_{\theta}$ is a $\ast$-automorphism, $\mathbf{A}_{\phi}^{\sigma
}\omega\mathbf{B}_{\theta}^{\sigma}$ is automatically satisfied if
$\mathbf{A}_{\phi}\omega\mathbf{B}_{\theta}$ holds; see Remark \ref{TvsTsig}.

The effect of modular balance $(\sigma^{\mu},\mu)\omega(\sigma^{\nu},\nu)$ is
of exactly the same form. We have
%"Optimale vervoer en kwantuminligting" skrif bl. 94.
\[
\sigma_{t}^{\mu}(a)=\zeta^{it}a\zeta^{-it}=\left[
\begin{array}
[c]{cc}
a_{11}                                  & \left(\frac{p_2}{p_1}\right)^{-it}a_{12}\\
\left(\frac{p_2}{p_1}\right)^{it}a_{21} & a_{22}
\end{array}
\right],
\]
as opposed to
\[
\alpha_{\theta}(a)=
\left[
\begin{array}
[c]{cc}
a_{11}          & e^{-i\theta}a_{12}\\
e^{i\theta}a_{21} & a_{22}
\end{array}
\right]  ,
\]
in terms of $a=[a_{lm}]$. It follows that modular balance has the same set of
implications for $\kappa$ as (a) to (d) above, with the corresponding
inequality in each condition just having to hold for some value of $t$.
Consequently, $e^{i\phi}$ can be replaced by $p_{2}/p_{1}$, and $e^{i\theta}$
by $q_{2}/q_{1}$, in (a) to (d). I.e., $p_{2}/p_{1}\neq1$, implies that
($p_{2}/p_{1})^{it}\neq1$ for some $t$, etc.
%"Optimale vervoer tussen dinamiese stelsels" skrif bl. 130-131.

Consider the case where the inequality in each condition (a) to (d) for
modular dynamics holds, namely $p_{2}/p_{1}\neq1$, $q_{2}/q_{1}\neq1$,
$p_{2}/p_{1}\neq q_{2}/q_{1}$ and $p_{2}/p_{1}\neq q_{1}/q_{2}$. We intend to
show that, irrespective of the transport cost function, the modular
Wasserstein distance
\[
W_{\sigma}(\mathbf{A}_{\phi},\mathbf{B}_{\theta})
\]
is independent of $\phi$ and $\theta$. To see this, note that in this case,
$\kappa$ is diagonal by (a) to (d) applied to modular balance. Therefore
$\mathbf{A}_{\phi}\omega\mathbf{B}_{\theta}$ (and thus necessarily
$\mathbf{A}_{\phi}^{\sigma}\omega\mathbf{B}_{\theta}^{\sigma}$) has no further
effect on $\kappa$. To this we need to add the coupling property, which in our
current representation reads 
$\omega(a\otimes1)=\mu(a)$ and $\omega(1\otimes a)=\nu(a)$, 
and the positivity of $\kappa$, neither of which depend on $\phi$
and $\theta$. It follows that 
$T_{\sigma}\left(  \mathbf{A}_{\phi},\mathbf{B}_{\theta}\right) $, 
and thus $W_{\sigma}(\mathbf{A}_{\phi},\mathbf{B}_{\theta})$, 
are independent of $\phi$ and $\theta$, as claimed.
Since $\mu\neq\nu$, as $p_{2}/p_{1}\neq q_{2}/q_{1}$,
we of course have $W_{\sigma}(\mathbf{A}_{\phi},\mathbf{B}_{\theta})\neq0$.

Other cases can be similarly explored, with similar conclusions. For example,
assuming that $e^{i\phi}\neq1$, $e^{i\theta}\neq1$, $e^{i\phi}\neq e^{i\theta}$ 
and $e^{i\phi}\neq e^{-i\theta}$, but dropping the assumptions 
$p_{2}/p_{1}\neq1$, $q_{2}/q_{1}\neq1$, $p_{2}/p_{1}\neq q_{2}/q_{1}$ and
$p_{2}/p_{1}\neq q_{1}/q_{2}$, 
we obtain diagonal $\kappa$ from $\mathbf{A}_\phi\omega\mathbf{B}_\theta$. In this situation modular balance obviously
holds no further implications for $\kappa$, since (a) to (d) will at most
reproduce the conditions already covered by 
$\mathbf{A}_{\phi}\omega\mathbf{B}_{\theta}$. It follows that 
$W(\mathbf{A}_\phi,\mathbf{B}_\theta) =
W_\sigma(\mathbf{A}_\phi,\mathbf{B}_\theta)
\neq0$, 
and that the value is independent of $\phi$ and $\theta$. 
It is non-zero, even for $\mu=\nu$,
simply because $\alpha_{\phi}\neq\alpha_{\theta}$.

\subsection{Asymmetry of $W$\label{OAfdAsim}}

%"Optimale vervoer tussen dinamiese stelsels" bl. 58-.

We proceed with the previous example. Here and in the next subsection we
calculate some values of $W$ and $W_{\sigma}$ explicitly for a specific
transport cost function. In particular, we use this example to show that
without the modular balance conditions in Definition \ref{T(A,B)}, symmetry is
lost. That is, $W$ in Theorem \ref{asim}, can indeed lack symmetry. In this
example, the lack of symmetry is specifically due to an absence of balance
between the modular dynamics, since the KMS duals are in balance with respect to all
the allowed transport plans, as mentioned above.

We now calculate $W(\mathbf{A}_{\theta},\mathbf{B}_{\theta})$. For simplicity
we assume that
\[
0<p\leq q<\frac{1}{2},
\]
where we have written $p=p_{1}$ and $q=q_{1}$.

The transport cost function used in defining $W$, is chosen to be given by the
following two self-adjoint generators for $M_{2}$:
\[
k_{1}=\left[
\begin{array}
[c]{cc}
0 & 0\\
0 & 1
\end{array}
\right]  \text{ and }k_{2}=\left[
\begin{array}
[c]{cc}%
0 & 1\\
1 & 0
\end{array}
\right]  .
\]
Then,
%"Optimale vervoer tussen dinamiese stelsels" bl. 63-64, maar volg op bl. 52-54 en 58-.%
assuming $e^{i\theta}\neq1$ and $e^{i\theta}\neq e^{-i\phi}$,
\[
W(\mathbf{A}_{\theta},\mathbf{B}_{\theta})=2+q-p-2\sqrt{\frac{p}{q}}
\]
and%
\[
W(\mathbf{B}_{\theta},\mathbf{A}_{\theta})=2+q-p-2\sqrt{\frac{1-q}
{1-p}}.
\]
This is achieved by determining the set of linear functionals $\omega$
satisfying balance, restricting it further by the coupling property and then
positivity, to obtain the set of all allowed transport plans, and finally
using standard calculus to obtain the minimum cost. One way of doing this, is
to use (\ref{Ialt}) directly. Another way, is to use a ``cost matrix'' $c_{k}$
as given by \cite[Equation (2)]{D}, and using the formula%
\[
I_{k}(\omega)=\omega(c_{k}),
\]
which in finite dimensions is equivalent to (\ref{Ialt}), as can be seen from
\cite[Section 3]{D}.

In particular, we see that%
\[
W(\mathbf{A}_{\theta},\mathbf{B}_{\theta})>W(\mathbf{B}_{\theta}
,\mathbf{A}_{\theta})
\]
when $p<q$. That is, $W$ is not symmetric.

\subsection{Jumps in $W_{\sigma}$\label{OAfdSpronge}}

Including modular balance in Subsection \ref{OAfdAsim} and recalling that
$\mathbf{A}_{\phi}^{\sigma}\omega\mathbf{B}_{\theta}^{\sigma}$ is
automatically satisfied, one finds that because of condition (c) above,
%Sien ook "Optimale vervoer tussen dinamiese stelsels" bl. 130.
\begin{equation}
W_{\sigma}(\mathbf{A}_{\theta},\mathbf{B}_{\theta})=\left\{
\begin{array}
[c]{l}
2+q-p\text{ if }p<q\\
0\text{ if }p=q.
\end{array}
\right.  \label{WsigVb}
\end{equation}
Note the jump in value from $p<q$ to $p=q$. The reason this happens is as
follows: The condition $\mathbf{A}_{\theta}\omega\mathbf{B}_{\theta}$ forces
$\kappa$ to be diagonal, except for $\kappa_{14}$ and $\kappa_{41}$, which are
not forced to be zero. On the other hand, for $p<q$, modular balance does
force $\kappa$ to be diagonal (so $\kappa_{14}=\kappa_{41}=0$), whereas for
$p=q$, it does not. Thus, for $p<q$, the allowed set of transport plans is
smaller, leading to the larger value in (\ref{WsigVb}) for $W_{\sigma}$,
compared to $W$. But when $p=q$, modular balance does not add further
restrictions on the allowed set of transport plans beyond those due to
$\mathbf{A}_{\theta}\omega\mathbf{B}_{\theta}$, hence in this case we have
$W_{\sigma}(\mathbf{A}_{\theta},\mathbf{B}_{\theta})=W(\mathbf{A}_{\theta
},\mathbf{B}_{\theta})=W(\mathbf{B}_{\theta},\mathbf{A}_{\theta})=0$. In
short, we have two different sets of allowed transport plans for the cases
$p<q$ and $p=q$ respectively.
%"Optimale vervoer tussen dinamiese stelsels" bl. 64-65.

This jump in $W_{\sigma}(\mathbf{A}_{\theta},\mathbf{B}_{\theta})$ indicates
that the inclusion of modular balance may not be natural in all respects if we
want to consider distances between states. For example, one would prefer
$W_{\sigma}(\mathbf{A}_{\theta},\mathbf{B}_{\theta})$ to converge to zero as
$\mu$ and $\nu$ approach one another in a natural way, as in this case where
$p$ and $q$ approach each other in the usual metric on $\mathbb{R}$.

The latter is indeed what happens in the classical analogy of this example,
for a two point metric space and the expectations with respect to the
probability measures on this space being the states on the von Neumann algebra
$\mathbb{C}^{2}$ of functions on it. No jumps as above occur in this case,
since even in a much more general setting, the classical Wasserstein metric on
the states, metrizes the weak topology on the states \cite[Remark
7.13(iii)]{V1}.
%"Optimale vervoer tussen dinamiese stelsels" bl. 133.

This could mean
that for states, the asymmetric Wasserstein metrics in Theorem \ref{asim}, are
more natural in the bimodule approach. As mentioned in \cite[Section 7]{D},
this may be sensible, as the direction of transport would then appear to be
reflected to some extent in the asymmetric metric.

As the remaining subsections illustrate, jumps can occur in $W$ as well, for
essentially the same reasons, due to other dynamics. For systems (as to
opposed to states) jumps in $W$ and $W_{\sigma}$ could be viewed as feature,
not a drawback, as will be explained.

%Moet ek enige verdere besonderhede insluit?

\subsection{Reduced systems on $M_{2}$\label{OAfdRed}}

%"Optimale vervoer tussen dinamiese stelsels" bl. 113-115.
%Verdere bespreking op bl. 116-, maar het nog nie na iets gelei nie.
We proceed in a similar vein as the previous subsections, but in the context of Section \ref{AfdRed}.

We set $R=S=M_{2}$ in the notation of Section \ref{AfdRed}, but in terms of
the same approach to representations as in Subsection \ref{OAfdUn}. Consider
dynamics, including an interaction between the reservoir and the open system,
described by a Hamiltonian of the form%
\[
h=\Theta\otimes1_{2}+1_{2}\otimes\Phi+\lambda u\otimes v,
\]
where $\Theta,\Phi,u,v\in M_{2}$ are diagonal real matrices, and $\lambda
\geq0$ controls the strength of the interaction.

Let the states $\mu_{R}$ and $\mu_{S}$ be given by diagonal density matrices.
The dynamics on $R\bar{\otimes}S=R\odot S$ is given by 
$\alpha_{t} = e^{iht}(\cdot)e^{-iht}$ for all $t\in\mathbb{R}$, 
which clearly leaves $\mu$ invariant as required. Using
\[
d^{\mu}=\left[
\begin{array}
[c]{rr}%
d_{1}^{\mu} & 0\\
0 & d_{2}^{\mu}%
\end{array}
\right]
\]
with $0<d_{1}^{\mu}<1$ as the density matrix of $\mu_{R}$, the reduction of
$\alpha$ to $S$ gives%
\[
\alpha_{t}^{\lambda}(s):=\alpha_{t}^{\mathfrak{r}}(s)=\left[
\begin{array}
[c]{cc}%
s_{11} & \Xi_{\mu,t}(\lambda)s_{12}\\
\Xi_{\mu,t}^{\ast}(\lambda)s_{21} & s_{22}
\end{array}
\right]
\]
for all $s=\left[  s_{lm}\right]  \in M_{2}$, in terms of
\begin{align*}
\Xi_{\mu,t}(\lambda)  
&  
:= d_{1}^{\mu}e^{i(h_{1}-h_{2})t}+d_{2}^{\mu}e^{i(h_{3}-h_{4})t}\\
&  
= \left[ d_1^\mu e^{i\lambda u_1(v_1-v_2)t} + 
d_2^\mu e^{i\lambda u_2(v_1-v_2)t} \right]  
e^{i(\Phi_{1}-\Phi_{2})t}
\end{align*}
and its complex conjugate $\Xi_{\mu,t}^{\ast}$, where $h_{1},...,h_{4}$,
$\Phi_{1},\Phi_{2}$, $u_{1},u_{2}$ and $v_{1},v_{2}$ are the (diagonal)
entries of $h$, $\Phi$, $u$ and $v$ respectively. Note that $\alpha^{\lambda}$
depends on $\mu_{R}$, but not on the Hamiltonian $\Theta$ used for $R$, or on
$\mu_{S}$. We have thus obtained a reduced system
\[
\mathbf{A}^{\lambda}=\left(  \alpha^{\lambda},\mu_{S}\right)
\]
on $S$, with $\alpha^\lambda$ independent of the state $\mu_S$. Note that
$\alpha^\lambda$ does not have the semigroup property, unless 
$\lambda(u_l-u_2)(v_1-v_2)=0$. 
The latter condition boils down to no interaction.

Following exactly the same procedure, we can obtain another reduced system on
$S$,
\[
\mathbf{B}^{\lambda}=\left(  \beta^{\lambda},\nu_{S}\right),
\]
by using another state $\nu=\nu_{R}\otimes\nu_{S}$ given by diagonal density
matrices, while keeping the rest of the specifications the same.

%"Optimale vervoer tussen dinamiese stelsels" bl. 122-123 en 128-129.
Using the corresponding restrictions provided by (a) to (d) in Subsection
\ref{OAfdUn}, we can draw analogous conclusions. For example, assume that
$\Phi_{1}\neq\Phi_{2}$, $u_{1},u_{2}>0$, $u_{1}\neq u_{2}$ and $v_{1}\neq
v_{2}$. For the case $\mu_{R}\neq\nu_{R}$ it then follows that
\[
W\left(  \mathbf{A}^{\lambda_{1}},\mathbf{B}^{\lambda_{2}}\right)
\]
is independent of the two parameters $\lambda_{1}$ and $\lambda_{2}$, as long
as at least one of them is not zero. If $\mu_{R}=\nu_{R}$, on the other hand,
then $W\left(  \mathbf{A}^{\lambda_{1}},\mathbf{B}^{\lambda_{2}}\right)  $ is
independent of $\lambda_{1}$ and $\lambda_{2}$, when $\lambda_{1}
\neq\lambda_{2}$, even if one of them is zero, and alternatively when $\lambda
_{1}=\lambda_{2}$. Both these cases hold simply because the allowed $\kappa$
are diagonal when at least one of $\lambda_{1}$ or $\lambda_{2}$ is not zero
(respectively, when $\lambda_{1}\neq\lambda_{2}$), whereas $\kappa_{14}$ and
$\kappa_{41}$ are allowed to be non-zero otherwise. 

Moreover, using the same
transport cost as in Subsection \ref{OAfdAsim}, and $\zeta$ and $\eta$
appearing in Subsection \ref{OAfdUn} as the density matrix of $\mu_{S}$ and
$\nu_{S}$ respectively, we consequently find similar values for the
Wasserstein distance as found in Subsection \ref{OAfdAsim}. More precisely,
again writing $p=p_{1}$ and $q=q_{1}$, and assuming $0<p\leq q<\frac{1}{2}$,
we find:

For $\mu_{R}\neq\nu_{R}$,
\begin{equation}
W(\mathbf{A}^{\lambda_{1}},\mathbf{B}^{\lambda_{2}})=W(\mathbf{B}^{\lambda
_{2}},\mathbf{A}^{\lambda_{1}})=2+q-p \label{WvirRedDin}
\end{equation}
if at least one of $\lambda_{1}$ or $\lambda_{2}$ is not zero, simply because
in this case for $W$, the same sets of transport plans (one set for each of
the two orders in which the systems appear in $W$) are allowed as for
$W_{\sigma}$ in the first line of (\ref{WsigVb}), even for $p=q$, while

\[
W(\mathbf{A}^{0},\mathbf{B}^{0})=2+q-p-2\sqrt{\frac{p}{q}}
\]
and%
\[
W(\mathbf{B}^{0},\mathbf{A}^{0})=2+q-p-2\sqrt{\frac{1-q}{1-p}}.
\]
Analogous to $W_{\sigma}$ in Subsection \ref{OAfdAsim}, we now see a jump in
$W$ when the point $\lambda_{1}=\lambda_{2}=0$ is reached, since this is where
the sets of allowed transport plans change. The same values are achieved for
$\mu_{R}=\nu_{R}$, by the same arguments, but for the cases $\lambda_{1}
\neq\lambda_{2}$ and $\lambda_{1}=\lambda_{2}$ respectively, i.e.,
$W(\mathbf{A}^{\lambda},\mathbf{B}^{\lambda})=2+q-p-2\sqrt{p/q}$, etc.

In this example it appears that $W$ is not sensitive to the interaction
strength $\lambda$, except that for $\mu_{R}\neq\nu_{R}$ it distinguishes
between the case where non-zero interactions are involved, and the case where
there are no interactions, while for $\mu_{R}=\nu_{R}$ it discerns the
case where the interaction strengths differ, from the case where they do not.

When $\mu_{S}\neq\nu_{S}$, the distance $W_{\sigma}$ distinguishes even less
well between the different cases for the reduced dynamics in this example,
since balance between the modular dynamics is then already enough to force
$\kappa$ to be diagonal. 
Keeping in mind the implications of Corollary \ref{redKMSvirGroep}, it
follows as for (\ref{WvirRedDin}), that 
\[
W_\sigma(\mathbf{A}^{\lambda_1},\mathbf{B}^{\lambda_2}) = 2+q-p
\]
for all $\lambda_{1},\lambda_{2}\geq0$,
whether $\mu_{R}\neq\nu_{R}$ or not. For $\mu_{S}=\nu_{S}$, on the other hand,
balance between the modular dynamics does not force $\kappa_{14}$ and
$\kappa_{41}$ to be zero, hence this balance condition becomes redundant, thus
$W_{\sigma}(\mathbf{A}^{\lambda_{1}},\mathbf{B}^{\lambda_{2}})=W(\mathbf{A}
^{\lambda_{1}},\mathbf{B}^{\lambda_{2}})$ for all $\lambda_{1},\lambda_{2}
\geq0$, returning the various values above, but with $p=q$.

Therefore, when we want to distinguish qualitative
differences between dynamics more sharply, $W$ has the advantage, while $W_{\sigma}$ gives a
coarser view, as is to be expected from (\ref{Wfyner}).

\subsection{Translations on quantum tori}

%"Optimale vervoer tussen dinamiese stelsels" bl. 112 en 128-129.
For our last example we make use of disjointness as it appears in the theory
of joinings. Systems are \emph{disjoint}
%"Optimale vervoer tussen dinamiese stelsels" bl. 24-26.
exactly when the set of transport plans is
trivial, that is, consists solely of the product coupling. In this situation
Wasserstein distances are simple to compute.

In short, we consider actions of $\mathbb{R}^{2}$ on the von Neumann algebraic
quantum torus (or irrational rotation algebra) $M_{\theta}$ for a given
irrational number $\theta$, represented in standard form on the Hilbert space
$G=L^{2}(\mathbb{T}^{2})$, with respect to the normalized Haar measure on
$\mathbb{T}^{2}$, where $\mathbb{T}^{2}=\mathbb{R}^{2}/\mathbb{Z}^{2}$ is the
classical torus. We have such an action $\alpha^{p,q}$ for every pair of real
numbers $p$ and $q$, given by
\[
\alpha_{s,t}^{p,q}=\tau_{ps,qt}
\]
for all $(s,t)\in\mathbb{R}^{2}$, where $\tau$ is the natural translation on
$M_{\theta}$.

The quantum torus is a standard example in operator algebra, but for clarity
we briefly outline its construction. With every $\theta\in\mathbb{R}$, we
associate the von Neumann algebra $M_{\theta}$ generated by the unitary
operators $u,v\in\mathcal{B}(G)$ defined by
\[
\left(  uf\right)  (x,y):=e^{2\pi ix}f\left(  x,y+\theta/2\right)
\]
and
\[
\left(  vf\right)  \left(  x,y\right)  :=e^{2\pi iy}f\left(  x-\theta
/2,y\right)
\]
for all $f\in G$ and $\left(  x,y\right)  \in\mathbb{T}^{2}$. Then
$\Lambda:=1\in G$ is a cyclic and separating vector for $M_{\theta}$, which
defines a faithful normal trace $\Tr$ on $M_{\theta}$ by
\[
\Tr(a):=\left\langle \Lambda,a\Lambda\right\rangle
\]
for all $a\in M_{\theta}$.

A natural action $\tau$ of $\mathbb{R}^{2}$ on $M_{\theta}$, as $\ast
$-automorphisms which leave $\Tr$ invariant, can be obtained
via the classical translations
\[
T_{s,t}(x,y):=\left(  x+s,y+t\right)  \in\mathbb{T}^{2}
\]
for all $\left(  x,y\right)  \in\mathbb{T}^{2}$, by defining
\[
U_{s,t}:G\rightarrow G:f\mapsto f\circ T_{s,t}%
\]
and setting
\[
\tau_{s,t}(a):=U_{s,t}aU_{s,t}^{\ast}%
\]
for all $a\in M_{\theta}$, and all $\left(  s,t\right)  \in\mathbb{R}^{2}$. We
thus have a system $\mathbf{A}_{p,q}=(M_{\theta},\alpha^{p,q}
,\Tr)$ for any real numbers $p$ and $q$.

According to \cite[Example 3.14]{DSt} and Remark \ref{TvsTsig}, we have
\[
T_{\sigma}(\mathbf{A}_{p,q},\mathbf{A}_{c,d})=T(\mathbf{A}_{p,q}
,\mathbf{A}_{c,d})=\{\Tr\odot\Tr
'\}
\]
for all $p,q,c,d\in\mathbb{R}\backslash\{0\}$ with $p/c$ or $q/d$
irrational. Consequently, for a transport cost function given by any
self-adjoint set of generators of $M_{\theta}$, it follows that $W_{\sigma
}(\mathbf{A}_{p,q},\mathbf{A}_{c,d})=W(\mathbf{A}_{p,q},\mathbf{A}_{c,d}
)\neq0$ is independent of $p,q,c$ and $d$, as long as the mentioned
irrationality condition holds. In particular, these Wasserstein distances do
not depend on how far $(p,q)$ is from $(c,d)$ in $\mathbb{R}^{2}$. As an
obvious example, using $k=(u,v,u^{\ast},v^{\ast})$ in Definition \ref{I}, one
easily calculates $W_{\sigma}(\mathbf{A}_{p,q},\mathbf{A}_{c,d})=W(\mathbf{A}
_{p,q},\mathbf{A}_{c,d})=8$, since $E_{\Tr\odot
\Tr'}=\Tr(\cdot)1_{M_{\theta}}$
because of (\ref{E}).
%"Optimale vervoer tussen dinamiese stelsels" bl. 29 en 112.

For $p,q,c$ and $d$ for which the above mentioned irrationality condition does
not hold, disjointness could fail, that is, the set of transport plans could
be larger than $\{\Tr\odot\Tr'\}$, potentially leading to jumps in the Wasserstein distances. In
particular, we of course have $W_{\sigma}(\mathbf{A}_{p,q},\mathbf{A}
_{p,q})=W(\mathbf{A}_{p,q},\mathbf{A}_{p,q})=0$ for any $k$ in Definition
\ref{I}.

\subsection{Summary of conclusions}

We considered systems parametrized by constants appearing in their dynamics or
states. Varying these parameters, gives families of related systems. For pairs
of systems, this includes ranges where the dynamics of the two systems are in
some relation, for example having different interactions with a reservoir or
when the state of the reservoir is the same for both reduced systems. Both $W$
and $W_{\sigma}$ are often independent of such parameters, except for jumps at
the special points. These jumps are the result of changes in the allowed set
of transport plans. Such a change tends to occur in a ``discrete'' way when
the two systems' dynamics reaches some threshold, rather than just because of
any small perturbations in the dynamics.
%Such a change tends to occur in a "discrete" way when the nature of the difference 
%in the two systems' dynamics reaches some threshold or is altered in some way,

From this limited range of examples, it appears that $W$ and $W_{\sigma}$, and
indeed also the sets of transport plans $T$ and $T_{\sigma}$, tend to be able
to discern qualitative or structural differences between dynamics, but are
less sensitive to finer details or small perturbations in the dynamics. This
is in line with what may be expected from the definition, which adapts the
definition for states (or measures in the classical case) by still considering
a cost function on transport plans between states, rather than cost directly
related to dynamics, but regulates the allowed sets of transport plans using
the dynamics. We expect this behaviour of $W$ and $W_{\sigma}$ to be of value
in the intended applications mentioned in the introduction, namely questions
related to balance, detailed balance and joinings, where qualitative
differences between systems are relevant.

In Subsection \ref{OAfdSpronge} we also saw a case where $W_{\sigma}$ varies
continuously with the states (while keeping the dynamics the same), except for
a discontinuity when the states become equal. This is due to the balance
condition for the modular dynamics. As mentioned, this could be an indication
that $W$ is a more natural distance function than $W_{\sigma}$ for states
themselves (as opposed to systems), though at the cost of losing symmetry of
the distance function.

It would be of interest to investigate the ideas of this section in more
general terms, rather than just for our examples, to determine to what extent
the conclusion drawn here remain valid and to build further theory around them.

\end{document}